\documentclass[11pt]{article}
\usepackage{amsmath, amssymb, amsthm}
\usepackage{bbm}
\usepackage[nottoc]{tocbibind}
\usepackage{enumitem}
\usepackage{hyperref}
\usepackage{tikz}
\usetikzlibrary{calc}
\usepackage{xcolor}
\usepackage{multirow}
\usepackage{comment}

\newtheorem{theorem}{Theorem}[section]
\newtheorem{lemma}{Lemma}[section]
\newtheorem{conj}{Conjecture}[section]
\newtheorem{definition}{Definition}[section]
\newtheorem{corollary}{Corollary}[section]
\newtheorem{proposition}{Proposition}[section]
\newtheorem{remark}{Remark}[section]

\newcommand{\N}{\mathbb{N}}

\newcommand{\Z}{\mathbb{Z}}

\newcommand{\prob}{\mathbb{P}}

\newcommand{\stoch}{\text{stoch}}
\newcommand{\stat}[2]{\mu^{\stoch}_{#1,#2}}
\newcommand{\statn}[1]{\mu^{\stoch}_{#1}}
\newcommand{\one}{\mathbbm{1}}

\newcommand{\sgrleft}[2]{\mathsf{l}_{#1}(#2)}
\newcommand{\sgrright}[2]{\mathsf{r}_{#1}(#2)}
\newcommand{\sgrboth}[2]{\mathsf{b}_{#1}(#2)}
\newcommand{\trans}[1]{P_{#1}}

\usepackage[left=1in, right=1in, bottom = 1in, top = 1in]{geometry}
\title{Stochastic Sandpiles with Uniform Toppling Rule on the Line}
\author{David Beck-Tiefenbach, Robin Kaiser}
\date{\today}

\begin{document}
\maketitle
\begin{abstract}
    We consider the stochastic sandpile model with uniform toppling rule on the integer line. During a uniform toppling, with probability $1/3$ one particle is sent to the right of the toppled vertex, with probability $1/3$ one particle is sent to the left, and with probability $1/3$ two particles are sent out, one to the right and one to the left. We calculate exactly the stationary distribution of the stochastic sandpile Markov chain with this toppling rule on finite, connected subsets of the integers, and show that the infinite volume limit exists and is equal to the Dirac measure of the full configuration. For this end, we analyze where the excess mass leaves the system, when stabilizing the full configuration plus one additional particle on finite, connected subsets of the integers.
\end{abstract}

{\it Keywords:} stochastic sandpiles, abelian, stationary distribution, infinite volume limit, uniform toppling, sandpile gambler's ruin, hole probabilities, line.

{2020 Mathematics Subject Classification.} 60J10, 60K35.

\section{Introduction}\label{sec:intro}
In 1987, Per Bak, Chao Tang and Kurt Wiesenfeld introduced the abelian sandpile model in \cite{BTW87,BTW88}. Originally known under the name Bak-Tang-Wiesenfeld model, it was the first discovered dynamical system that reached a critical state on its own, without further fine-tuning of external parameters; a concept known as self-organized criticality. Deepak Dhar later analyzed the model further, and showed that the toppling dynamics are abelian \cite{D90}, which led him to coin the term {\it abelian sandpile model}.

In the abelian sandpile model, the toppling operations are deterministic, where upon toppling of a vertex particles are sent out along every adjacent edge to the neighbouring vertices. The only source of randomness thus comes from the choice of the initial configuration and the placement of the particles. Therefore, a natural generalization of the model is to allow the toppling operations themselves to be random outcomes. That is, upon toppling a vertex, particles are distributed at random among the neighbours of the toppled vertex. This generalization is known as the {\it stochastic sandpile model}.

In this manuscript, we want to analyze the stochastic sandpile model on the integer line, where the toppling operations are given as the {\it uniform topplings}. During a uniform toppling, every possibility of sending out one particle along every adjacent edge is equally likely. That is, on the line, when toppling a vertex, with probability $1/3$ a single particle is either sent to the right or the left, and with probability $1/3$ two particles are sent out, one to the right and one to the left.

More specifically, we analyze the {\it stochastic sandpile Markov chain} on finite line segments. The stochastic sandpile Markov chain evolves by adding a particle at a uniformly chosen random location, and then stabilizing the resulting sandpile configuration. For the deterministic counterpart, the abelian sandpile Markov chain, it is known that the stationary distribution will always be uniform on the recurrent sandpile configurations on any finite graph with a sink \cite{D90,HLMP08}. Furthermore, for the abelian sandpile Markov chain, we can describe the eigenvectors via the so-called {\it multiplicative harmonic functions} \cite{JLP19}, which can be used to prove the cut-off phenomenon on the two-dimensional integer grid \cite{HJL19}. It has even been shown that the infinite volume limit exists on $\Z^d$ \cite{AJ04}, and a sufficent condition for the existence of the infinite volume limit has been found for arbitrary graphs in \cite{J12,JW14}. Also the density of the infinite volume limit has been calculated on $\Z^2$ \cite{P94,P10,P11,KW15,KW16}, as well as the Sierpi\'nski gasket graph \cite{HKS25}. We refer the reader to \cite{J18} for an introduction to the abelian sandpile model and a collection of further results on the model.

For the stochastic sandpile Markov chain, on the other hand, results concerning the stationary distribution or its infinite volume limit are more meagre. In \cite{ptoppling-CMS}, a combinatorial description of the recurrent sandpiles is given for the $p$-toppling rule, which is similar to the uniform toppling rule of the current manuscript. In \cite{CF25} an exact sampling algorithm for the stationary distribution is described, yet again for a different toppling rule, where on all graphs the toppling threshold is $2$.

For the uniform toppling rule on finite line segments, we are able to fully describe the set of recurrent sandpiles, as well as the stationary distribution. We are also able to show the existence of the infinite volume limit on the integers. Our main results are described in the following theorem.

\begin{theorem}\label{thm:main}
    Let for $a,b\in\Z$ with $a<0<b$ denote by $\stat{a}{b}$ the stationary distribution of the stochastic sandpile Markov chain with uniform toppling rule on the set $[[a,b]]:=\{a,a+1,...,b-1,b\}$, where the outside is identified as the sink.
    
    \begin{enumerate}[label=(USS\arabic*),leftmargin=*]
        \item It holds that
        $$\stat{a}{b}(\mathbbm{1}_{[[a,b]]})=\frac{1}{2}+\frac{1}{2(|a|+b+2)},$$
        where $\one_{[[a,b]]}$ denotes the full configuration with one particle at every vertex of $[[a,b]]$.\label{USS1}

        \item The stationary distribution $\stat{a}{b}$ is uniform on the vacant configurations. That is, for all $j\in[[a,b]]$ it holds that
        $$\stat{a}{b}(\mathbbm{1}_{[[a,b]]}-\delta_j)=\frac{1}{2(|a|+b+2)},$$
        where $\delta_j$ is the configuration with one particle at $j$, and $0$ everywhere else.\label{USS2}

        \item The infinite volume limit of the stationary distributions on the line exists and is equal to the Dirac measure of the full configuration on $\Z$, i.e.
        $$\lim_{[[a,b]]\uparrow \Z}\stat{a}{b}=\delta_{\mathbbm{1}_\Z}.$$
        Here, the limit denotes the uniform weak limit, that is, for all sequences $(a_n)_{n\in\N}\in\Z_{<0}^\N$ and $(b_n)_{n\in\N}\in\Z_{>0}^\N$ with
        \begin{align*}
            \lim_{n\rightarrow\infty}a_n=-\infty,&&\text{and}&&\lim_{n\rightarrow\infty}b_n=\infty,
        \end{align*}
        and all finite subsets $A\subseteq\Z$ it holds that
        $$\lim_{n\rightarrow\infty}\stat{a_n}{b_n}(\eta|_A=\mathbbm{1}_A)=1.$$\label{USS3}
        \end{enumerate}
\end{theorem}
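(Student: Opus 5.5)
The plan is to first identify the recurrent configurations, then compute the one-step transition kernel restricted to them, and finally verify the balance equations for the proposed measure. Throughout, write $n:=|a|+b+1$ for the number of vertices of $[[a,b]]$. Let $\mathcal R:=\{\one_{[[a,b]]}\}\cup\{\one_{[[a,b]]}-\delta_j: j\in[[a,b]]\}$.

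\textbf{Step 1: the recurrent class.} I will show that $\mathcal R$ is closed under the chain and reachable from every stable configuration. Reachability is easy: adding particles only at holes fills them without any toppling, so the full configuration is reached with positive probability from any stable configuration. For closedness, take $\eta\in\mathcal R$, add a particle and stabilize. By the abelian property (in distribution) I may choose a convenient toppling order. I would track the evolving configuration and show that at every stage it has total mass at least $n-1$ on $[[a,b]]$, with excess (sites carrying $2$) and deficit (sites carrying $0$) interlacing. Concretely, a two-sided toppling at $x$ creates a $0$ at $x$ flanked by two $2$'s. Each $2$ then performs a "sandpile gambler's ruin" walk, and when it lands on the $0$ it cancels it. Counting particles, the final stable configuration has either $0$ or $1$ holes, since exactly one or two particles leave through the sink. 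This is the structural lemma on where the excess mass leaves the system.

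\textbf{Step 2: transition probabilities.} For the full configuration plus $\delta_x$, I will compute the probability $p_x$ of ending at $\one_{[[a,b]]}$ and the probabilities $q_{x,j}$ of ending at $\one_{[[a,b]]}-\delta_j$. I expect these to come from gambler's-ruin type hitting probabilities of the excess particle(s) at the two boundary points $a-1$ and $b+1$. These are linear in $x$, so $\frac1n\sum_x q_{x,j}$ should have a clean closed form. For $\one_{[[a,b]]}-\delta_j$ plus $\delta_x$ there are two cases. If $x=j$, the hole is filled with probability one and the chain moves to the full configuration. If $x\neq j$, the excess walks until it either fills the hole at $j$, giving the full configuration, or exits, possibly relocating the hole. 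Again I would reduce this to hitting probabilities, using abelianness to first run the walk until it hits $j$ or the sink.

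\textbf{Step 3: balance equations.} I will then check that $\mu(\one)=\tfrac12+\tfrac1{2(n+1)}$ and $\mu(\one-\delta_j)=\tfrac1{2(n+1)}$ satisfy $\mu=\mu P$ on $\mathcal R$. The masses sum to $\tfrac12+\tfrac{1}{2(n+1)}+\tfrac{n}{2(n+1)}=1$, and uniqueness follows from irreducibility on $\mathcal R$. The claimed symmetry suggests an easier route: show that $\sum_{\eta\in\mathcal R}\mu(\eta)P(\eta,\one-\delta_j)$ is independent of $j$, and then use a single equation for the full configuration. This gives \ref{USS1} and \ref{USS2}. The main obstacle is Step 2, specifically the exact hole-location distribution after a two-sided toppling cascade. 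There two excess particles move simultaneously around a moving hole, and I would try to decouple them by toppling one to absorption before moving the other.

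\textbf{Step 4: the infinite volume limit.} \ref{USS3} is then immediate. For a finite $A\subseteq\Z$ and $n$ large enough that $A\subseteq[[a_n,b_n]]$, only the configurations $\one-\delta_j$ with $j\in A$ fail the event $\eta|_A=\one_A$. Hence
$$1-\stat{a_n}{b_n}(\eta|_A=\one_A)=\frac{|A|}{2(|a_n|+b_n+2)}\to0.$$
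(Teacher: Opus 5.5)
Your Steps 1 and 4 are sound, and your overall architecture matches the paper's: use abelianness to stabilize the piece between the sink and the hole, then solve balance equations. The interlacing of $2$'s and $0$'s you describe is exactly what guarantees that at most one particle leaves through each side, and Step 4 is the paper's argument verbatim. The gap is Step 2, which carries all of the actual content and which you leave as an expectation. Worse, the one concrete prediction you make there is false: the exit probabilities are not linear in $x$.

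The excess mass is not a single random walker. With probability $1/3$ a toppling at $k$ leaves a hole at $k$ and two excess particles, one of which may later refill that hole. Work on $C_n$, your segment after translation. Condition on the first toppling at $k$ and, in the split case, stabilize $\{k+1,\dots,n\}$ first; the hole at $k$ absorbs at most one particle. The paper obtains
$$\sgrleft{n}{k}=\tfrac13\bigl(\sgrleft{n}{k-1}+\sgrleft{n}{k+1}+\sgrleft{n-k}{1}\,\sgrleft{n}{k-1}\bigr),\qquad \sgrleft{n}{0}=1,\ \sgrleft{n}{n+1}=0 .$$
This is uniquely solvable by induction on $n$, and its solution is the quadratic $\sgrleft{n}{k}=\frac{(n-k+1)(n-k+2)}{(n+1)(n+2)}$; a linear profile does not satisfy it. These values give $\frac1n\sum_x\sgrboth{n}{x}=\frac13$, hence $\trans{n}(\one_{C_n},\one_{C_n})=\frac23$ and $\trans{n}(\one_{C_n}-\delta_j,\one_{C_n})=\frac{n+2}{3n}$ for every $j$. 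That this last quantity does not depend on $j$ is what lets (USS1) follow from a single balance equation without knowing the vacant weights. Classical gambler's-ruin values would give the wrong numbers here.

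For (USS2), and therefore (USS3), you need the law of the hole location. Knowing only the total vacant mass $\frac{n}{2(n+1)}$ does not force each individual weight to vanish. Your "topple one particle to absorption before moving the other" is a heuristic, not an argument. The key lemma you are missing is
$$\mathsf{h}_n^j(i)=\frac{\sgrboth{n}{i}}{n}:$$
given that two particles are lost, the hole is uniform on $C_n$, whatever the starting point $i$. The paper proves this with two recurrences, which determine $\mathsf{h}_n^j(\cdot)$ uniquely given smaller systems, and then verifies the guessed formula:
\begin{itemize}
\item a bulk recurrence, obtained by stabilizing $\{1,\dots,n-1\}$ first and then the segment to the right of the newly formed hole at $x$, which contributes only if $x\le j$;
\item a boundary recurrence, obtained from the first toppling at $n$.
\end{itemize}
Applied to the sub-segments left or right of an existing hole, this uniformity yields $\trans{n}(\one_{C_n},\one_{C_n}-\delta_j)=\trans{n}(\one_{C_n}-\delta_i,\one_{C_n}-\delta_j)=\frac1{3n}$ for all $i\neq j$. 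That is exactly the $j$-independence your Step 3 presupposes. Without this lemma, or at least a proof that $\sum_x\mathsf{h}_m^j(x)$ is independent of $j$ for every segment length $m$, the proposal does not establish (USS2) or (USS3).
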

Theorem \ref{thm:main} shows a remarkable difference between the stochastic sandpile stationary distribution, and its deterministic counterpart, the stationary distribution of the abelian sandpile model. For the abelian sandpile model, it is well-established, that on any finite graph with a sink vertex, the stationary distribution of the corresponding abelian sandpile Markov chain is uniform on all recurrent configurations. Despite the symmetries of the uniform toppling rule, the behaviour of the stochastic sandpile Markov chain is far from uniform, in fact, it puts half of its total weight on the full configuration, and thus seems to favor an abundance of particles. However, restricted to only the vacant configurations missing one particle, the stationary distribution of the stochastic sandpile model still remains uniform.

The proof of Theorem \ref{thm:main} relies on analyzing where the excess mass leaves the system upon stabilizing the full configuration plus one additional particle. In fact, if we know where the excess mass leaves our finite subset, and where the vertices with $0$ particles after stabilization occur, we can fully calculate the transition probabilities of the stochastic sandpile Markov chain. With the transition probabilities in hand, we can then analyze the stationary distribution.

\textbf{Outline.} In Section \ref{sec:prelim} we will introduce all the necessary concepts in order to properly state and define our model, and to prove Theorem \ref{thm:main}. Section \ref{sec:concentration} will be devoted to proving part \ref{USS1} of Theorem \ref{thm:main}, that is we show that the stationary distribution concentrates around half of its total mass on the full configuration. For this end, we will analyze the sandpile gambler's ruin problem in Subsection \ref{sec:gambler}, where we examine where the excess mass leaves the system upon stabilization of the full configuration plus one additional particle. With the results of the sandpile gambler's ruin, we will then prove \ref{USS1} in Subsection \ref{sec:conc-on-full}. In Section \ref{sec:infinite-volume-limit}, we will show that the infinite volume limit exists and is fully supported on the full configuration on $\Z$. Here, we will need to refine our analysis of the sandpile gambler's ruin, and investigate where the vertices with $0$ particles appear when two particles are lost during stabilization of the full configuration plus one additional particle. With these results, we will then show \ref{USS2} in Subsection \ref{sec:vacant}, and finally the infinite volume limit \ref{USS3} will follow in Subsection \ref{sec:proof-infinite-volume}.

\subsection{Related Works}\label{sec:relwork}
In this section we want to give an overview of related works concerning the stochastic sandpile model and related models. The following is by no means a full and detailed exposition of works on stochastic sandpiles, but merely an overview to allow for a proper placement of the current manuscript within the research area.

For the stochastic sandpile model and its cousin, the activated random walk model, we typically differentiate between the {\it driven-dissipative} dynamics and the {\it fixed-energy} dynamics.

In the driven-dissipative dynamics, we consider the model on finite graphs with designated sink, and keep adding and stabilizing the configuration until it reaches an equilibrium state. In contrast, for the fixed-energy dynamics, we add to every vertex of our graph a random number of particles, and then try to stabilize the resulting initial configuration in finite time. A popular conjecture, the {\it density conjecture}, claims, that the stationary density of driven-dissipative dynamics coincides with the critical density of the fixed-energy dynamics.

\textbf{Stochastic Sandpiles.} For the stochastic sandpile model, several random toppling operations, different from the uniform topplings of this manuscript, have been considered before. In \cite{M91}, a model is studied, where instead of a toppling threshold, the system is driven by a hard repulsion of two different types of particles which cannot co-exist at the same location.

In \cite{ptoppling-CMS}, a toppling rule similar to the uniform toppling is introduced. For a fixed parameter $p\in[0,1]$, upon toppling a vertex, for each edge adjacent to the toppled vertex independently of the other edges, a particle is sent out along the edge with probability $p$. \cite{S24} analyzes the combinatioral aspect of this {\it $p$-toppling} on complete graphs, and in \cite{SZ25} a similar analysis is done on complete bipartite graphs. We want to emphasize here, that the dynamics of the uniform toppling rule coincide with $p$-topplings for $p=1/2.$

Yet another toppling rule is investigated in \cite{HHRR22}, where it is shown that the critical density on the integers is strictly below $1$, proving a conjecture by Rolla and Sidoravicius \cite{RS12}. Here, a vertex is declared unstable if it is occupied by more than $2$ particles, upon which a toppling lets two particles perform a random walk step to the neighbours of the toppled vertex. It has been shown in \cite{CFT24}, that the critical density is strictly below $1$ in higher dimensions as well.

\textbf{Acitvated Random Walks.} Activated random walks are a stochastic network, where active particles perform a random walk until they either fall asleep, or reach a designated sink vertex and are removed from the system. Should two particles occupy the same vertex at the same time, both particles become active again and continue performing random walks independently. See \cite{R15,LL24} for a nice overview of the topic and some related research questions.

For activated random walks, it has also been shown that the critical density is strictly less than $1$ on the Euclidean lattice in arbitrary dimensions \cite{AFG24}. In the driven-dissipative dynamics, also bounds on the mixing time have been derived and an exact sampling algorithm for the stationary distribution has been proven \cite{LL24}. Also the density conjecture has been proven to hold true in dimension one \cite{HJJ24}.

Activated random walks are believed to belong to the same universality class as the stochastic sandpile model considered in \cite{HHRR22,CFT24}, which highlights a deep connection between these two models \cite{L04}.
\section{Preliminaries}\label{sec:prelim}
In this section we will define the stochastic sandpile model and collect all results necessary to prove Theorem \ref{thm:main}.

\textbf{Uniform Toppling Rules.} Let $a,b\in\Z$ with $a<b$ and consider the set $[[a,b]]:=\{a,a+1,...,b-1,b\}$. For every $v\in[[a,b]]$ we consider a random variable $L_v$ taking values in $\Z^{[[a,b]]}$ with the following distribution
\begin{align*}
    \prob(L_v=\delta_{v-1}-\delta_v)=\prob(L_v=\delta_{v+1}-\delta_v)=\prob(L_v=\delta_{v+1}+\delta_{v-1}-2\delta_v)=\frac{1}{3}.
\end{align*}
Here, $\delta_i$ for $i\in[[a,b]]$ denotes the Dirac delta function
$$\delta_i:[[a,b]]\rightarrow\Z,\hspace{1cm}j\mapsto\begin{cases}
    1,&j=i\\
    0,&\text{else}
\end{cases},$$
and $\delta_i$ is constantly $0$ if $i\notin[[a,b]].$ We call $L_v$ the {\it uniform toppling} or the {\it uniform toppling rule} at $v$, and we call the collection of toppling rules $(L_v)_{v\in[[a,b]]}$ the {\it uniform toppling rules} on the line segment $[[a,b]]$. Notice that topplings at $a$ and $b$ have the chance to remove a particle from the system, we will thus refer to the outside as the sink vertex of the line segment $[[a,b]].$

\textbf{Stochastic Sandpiles.} Let $a,b\in\Z$ with $a<b$. Furthermore, let for every $v\in[[a,b]]$ an i.i.d. sequence of random toppling instruction $(L_v^i)_{i\in\N}$ be given, where
$$L_v^1\sim L_v.$$
Here "$\sim$" denotes that the two random variables are equal in distribution. A {\it sandpile configuration} on $[[a,b]]$ is a function $\eta:[[a,b]]\rightarrow\Z$, that assigns to every vertex of the line segment the number of particles that is placed at that vertex. Given a vertex $v\in[[a,b]]$, we say that $\eta$ is unstable at $v$, if $\eta(v)\geq 2$. We say the sandpile $\eta$ is unstable if it is unstable at some vertex $v\in V$, and stable otherwise.

Given a sandpile configuration $\eta$, {\it stabilization} of the sandpile is then defined in the following manner. Set $\eta_0:=\eta$ and let $u_0$ be the constant $0$ function on $[[a,b]]$. We call $u_0$ the odometer function. If $\eta$ is already stable at every vertex, then the process immediately terminates.

Otherwise, choose among the unstable vertices one vertex uniformly at random, and denote the choice by $v_1$. Then we set
$$\eta_1=\eta_0+L_{v_1}^{u_0(v_1)+1},\hspace{1cm}u_1=u_0+\delta_{v_1}.$$
In words, we add a realization of the toppling rule at vertex $v_1$ to the sandpile configuration and increment the odometer function by $1$ at $v_1$.

If now for some $n\in\N$ the configuration $\eta_n$ is stable at all vertices, then the process terminates. Otherwise, choose one vertex uniformly at random from all unstable vertices and set
$$\eta_{n+1}=\eta_n+L_{v_{n+1}}^{u_n(v_{n+1})+1},\hspace{1cm}u_{n+1}=u_n+\delta_{v_{n+1}},$$
where $v_{n+1}$ is the chosen vertex. This stabilization will eventually terminate for every initial sandpile configuration $\eta$.
\begin{lemma}[Stabilization is Well-Defined]\label{lemma:stabilization-well-defined}
    For all initial sandpile configurations $\eta_0:[[a,b]]\rightarrow\Z$, there exists almost surely an $N\in\N$ such that $\eta_N$ is stable at all vertices.
\end{lemma}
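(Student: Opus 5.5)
Every toppling either keeps the total mass $|\eta_n|:=\sum_{v}\eta_n(v)$ fixed (a single particle moves to a neighbour, or to the sink) or lowers it, and the mass is bounded below along the process. So the plan is to show two things. First, the process cannot run forever without losing mass to the sink. Second, mass is lost almost surely at a positive rate whenever the configuration stays unstable. We may assume $\eta_0\ge 0$ at the unstable sites; negative entries only ever increase, since a vertex topples only if it carries at least two particles, and they can be tracked by the same argument.

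The first step is to bound the number of topplings that can occur while the mass is unchanged. Consider the potential $\Phi(\eta)=\sum_{v\in[[a,b]]}\eta(v)\,(v-a+1)(b+1-v)$, a strictly concave quadratic weight vanishing just outside the segment. Rather than use $\Phi$ directly, I will use a cruder dichotomy. Stabilization restricted to a fixed total mass $M$ lives on a finite state space: configurations whose nonnegative parts sum to at most $M+C$, with the negative entries frozen or increasing. Any sequence of topplings that never sends mass to the sink therefore stays inside a finite set.

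The key step is to show that, from any unstable configuration $\eta_n$, there is a bounded number $K$ (depending only on $b-a$ and the mass) of further toppling choices with probability at least $\varepsilon>0$ after which either the configuration is stable or a particle has left through $a$ or $b$. The construction is as follows. Pick an unstable vertex $v$, which is chosen with probability at least $1/(b-a+1)$. Its toppling sends a single particle to the right with probability $1/3$. That particle lands on $v+1$, which is then either unstable, and we topple it to the right again, or it now holds at most one particle. Following a single particle pushed rightward, we can arrange within at most $(b-a+1)\cdot(\text{mass})$ steps that either some vertex in the chain is $b$ and its right-toppling sends a particle to the sink, or the chain stops. Concretely, I will push the rightmost unstable vertex to the right repeatedly. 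The rightmost unstable site moves strictly right or the instability disappears at that location, and the configuration to its right is stable, so pushing a particle from the rightmost unstable vertex creates at most one new unstable vertex, directly to its right. Within at most $b-a+1$ such forced moves, either a particle exits at $b$ or the rightmost unstable vertex vanishes. Since instabilities are finite in number and each "vanish" event reduces an explicit lexicographic potential, such as the sum over unstable sites of excess mass weighted by the distance to $b$, a bounded number of these moves, each with probability at least $\frac13\cdot\frac1{b-a+1}$, leads to stability or to a mass drop.

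Given this, the conclusion is a standard geometric-trials (Borel--Cantelli/Markov) argument. Between consecutive mass drops, the number of steps is stochastically dominated by $K$ times a geometric random variable, which is almost surely finite. Mass can drop only finitely many times, since at most $|\eta_0|$ plus a bounded amount of mass is available. Hence $N<\infty$ almost surely.

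The main obstacle is making the uniform lower bound $\varepsilon$ rigorous. The rightmost-pushing strategy must genuinely terminate with only boundedly many steps; I expect to handle this with the lexicographic potential on the rightmost unstable site.
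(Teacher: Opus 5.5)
Your argument is correct in its overall structure but takes a different route from the paper. The paper argues qualitatively and by contradiction. If stabilization never terminates, some vertex topples infinitely often. Because its instructions are i.i.d., it sends infinitely many particles to each neighbour, which must therefore also topple infinitely often. Propagating this along the segment reaches the boundary vertex $a$, each of whose topplings removes a particle with probability $2/3$, so infinitely many particles would be lost, contradicting the finiteness of the mass. You instead view stabilization as a Markov chain on a finite reachable state space. You prove a uniform probability of escape (stabilization or mass loss) within a bounded number of steps, and conclude by geometric trials. The paper's proof is shorter and needs no potential or explicit escape path. Yours is quantitative: for fixed $\eta_0$ it gives an exponential tail for the number of topplings $N$.

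One intermediate claim in your key step is false as stated. The rightmost unstable site need not move right or lose its instability: if $\eta(v)\ge 3$ and $\eta(v+1)\le 0$, a right-move leaves $v$ as the rightmost unstable site. So the bound of $b-a+1$ forced moves is wrong. This is not load-bearing, and the obstacle you flag disappears once you weight by the distance $b+1-v$ to the right sink. Then your weighted-excess potential decreases at \emph{every} right-move, not just at vanish events. For $E(\eta)=\sum_v(\eta(v)-1)^+(b+1-v)$, a right-move from any unstable $v$ (including $v=b$) lowers $E$ by at least one, and $E(\eta)=0$ exactly when $\eta$ is stable. Negative parts only shrink and the mass never grows, so $E(\eta_n)\le K:=(b-a+1)\sum_v\eta_0(v)^+$ for all $n$. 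The next toppling uses a fresh instruction, so it is a right-move with conditional probability $1/3$ whichever unstable vertex is selected. Hence from every unstable state the process stabilizes within $K$ further steps with probability at least $3^{-K}$, and $\prob(N>mK)\le(1-3^{-K})^m$. This makes the choice of the rightmost vertex, the factor $1/(b-a+1)$, and the counting of mass drops unnecessary. The argument then also works for any selection rule that does not look ahead at unused instructions.
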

\begin{proof}
    Assume the contrary, that is there exists some configuration $\eta_0$ which does not stabilize. Since $[[a,b]]$ is finite, there exists a vertex $v\in[[a,b]]$ which topples infinitely often. Since $L_v$ has positive probability of sending a particle to either the left or the right, both $v-1$ and $v+1$ receive an infinite amount of particles from $v$. And because in every step we choose the toppled vertex uniformly at random, also $v-1$ and $v+1$ must topple infinitely often.

    We thus see inductively that all vertices in $[[a,b]]$ must topple infinitely often, which means that also $a$ topples infinitely often. However, since toppling at $a$ has positive probability of reducing the total number of particles in the system by $1$, an infinite amount of particles is lost during stabilization. However, $\eta_0$ starts with only a finite amount of particles, thus we arrive at a contradiction.

    This means, that all $\eta_0$ must stabilize in finitely many steps almost surely.
\end{proof}
Given a sandpile configuration $\eta$, we will denote by $\eta^\circ$ the final stabilized configuration. Notice that in the stochastic sandpile model, $\eta^\circ$ is a random variable, that is the outcome of the stabilization is itself already random, given that the initial configuration was unstable.

\textbf{Abelian Property of Topplings.} An important property of the stochastic sandpile model is, that the order in which we topple our vertices does not influence the final result. Formally, this means that if we have two unstable vertices $v,w\in [[a,b]]$, if we first topple at $v$ and then at $w$ the final configuration will have the same law as if we first toppled at $w$ and then at $v$. This follows directly from the fact that topplings are defined as additions of random columns, i.e.
$$(\eta+L_v^i)+L_w^j\sim (\eta+L_w^j)+L_v^i,$$
for all $v,w\in V$ and $i,j\in\N$ and all sandpile configuration $\eta$.

The abelian nature of the toppling operations allows us to choose the order in which we topple the unstable vertices during stabilization of a given sandpile configuration $\eta$, that is, we do not necessarily need to choose the next toppled vertex uniformly among the currently unstable vertices.

We will repeatedly make use of the abelian property throughout this paper, by first stabilizing a given sandpile configuration in some finite subset $A\subseteq [[a,b]]$, before we stabilize it fully on the whole line segment $[[a,b]].$

\textbf{Stochastic Sandpile Markov Chain.} Using the stabilization dynamics of stochastic sandpiles, we can define for all $a,b\in\Z$ with $a<b$ a Markov chain, which takes values in the set of stable sandpiles on $[[a,b]].$

Let $(X_n^{a,b})_{n\in\N}$ be an i.i.d. sequence of random variables that are uniformly distributed on $[[a,b]].$ Let further $\eta_0$ be some initial, stable sandpile configuration. We then define the stochastic sandpile Markov chain via
$$\eta_{n+1}=(\eta_n+\delta_{X_n^{a,b}})^\circ,$$
that is, in every time step we add an additional particle at a uniformly chosen location, and then stabilize the configuration. We call this Markov chain the {\it stochastic sandpile Markov chain} on $[[a,b]]$. It is not hard to see, that the set of recurrent sandpiles is given as
$$\mathcal{R}_{a,b}:=\{\one_{[[a,b]]}\}\cup\{\one_{[[a,b]]}-\delta_j:j\in[[a,b]]\},$$
where $\one_{[[a,b]]}$ denotes the full configuration on $[[a,b]]$ that has one particle at every vertex. We will further denote by $\stat{a}{b}$ the stationary distribution of this Markov chain supported on $\mathcal{R}_{a,b}.$

\textbf{Infinite Volume Limit.} By the {\it infinte volume limit} of stationary measures we mean the uniform weak limit of the stationary measures on the finite line segments $[[a,b]]$. That is, we say that the infinite volume limit exists and is equal to some measure $\mu$ supported on the set
$\{\eta:\Z\rightarrow \{0,1\}\},$
if for all sequencese $(a_n)_{n\in\N}$ and $(b_n)_{n\in\N}$ with
\begin{align*}
    \forall n\in\N: a_n<0<b_n,&&\text{and}&&\lim_{n\rightarrow\infty}a_n=-\infty,&&\text{and}&&\lim_{n\rightarrow\infty}b_n=\infty,
\end{align*}
it holds for all finite subsets $A\subseteq \Z$ and all configurations $s:A\rightarrow\{0,1\}$ that
$$\lim_{n\rightarrow\infty}\stat{a_n}{b_n}(\eta|_A=s)=\mu(\eta|_A=s).$$
That is, all finite marginals of the stationary measures must converge to the finite marginals of $\mu$, independently of our choice for $(a_n)_{n\in\N}$ and $(b_n)_{n\in\N}.$ We will denote this limit by
$$\lim_{[[a,b]]\uparrow\Z}\stat{a}{b}=\mu.$$
\section{Concentration Phenomenon of the Stationary Distribution}\label{sec:concentration}
In this section we will show that the stationary distribution of the stochastic sandpile Markov chain on finite, connected subsets of the integers concentrates almost half its total weight on the full configuration. For ease of readability, we will throughout this section only work on the sets of the form $C_n:=\{1,...,n\}$ for $n\in\N$. As any finite, connected subset of $\Z$ can be obtained from simply shifting a set $C_n$ of suitable length, all the results of this section will carry over to arbitrary finite, connected subsets of the integers. We will then denote the stationary distribution of the stochastic sandpile Markov chain on $C_n$ with uniform toppling rule by $\statn{n}$.

The idea of this section will be to first analyze what happens during stabilization of the configuration $(\one_{C_n}+\delta_j)$ for $j\in C_n$. More precisely, we want to find out the probabilities of losing exactly one particle during stabilization - either through the left side or the right side - or of losing two particles, necessarily then through both sides.

With these probabilities, we can then find the transition probabilities of the stochastic sandpile Markov chain to the full configuration, which we can then utilize to prove the concentration phenomenon \ref{USS1}.

\subsection{Sandpile Gambler's Ruin}\label{sec:gambler}
In order to analyze the transition probabilities to the full configuration $\mathbbm{1}_{C_n}$, we must precisely track where the excess mass exits the system when stabilizing the configuration $\big(\mathbbm{1}_{C_n} + \delta_j\big)$ for some $j \in C_n$. 

Recall from Section \ref{sec:prelim} that the toppling rules $(L_v)_{v\in C_n}$ are defined such that particles sent outside of $C_n$ are removed. To rigorously count these lost particles, let us define for each toppling $L_v^i$ the indicator variables $\Delta_L(L_v^i) \in \{0,1\}$ and $\Delta_R(L_v^i) \in \{0,1\}$, which equal $1$ if the toppling operation sends a particle to $v-1$ or $v+1$, respectively.

Given an initial configuration $\eta_0$, let $u : C_n \to \mathbb{N}_0$ denote the final odometer function upon stabilization. The total number of particles that exit the system through the left boundary ($0$) and the right boundary ($n+1$) are given by the random variables:
\begin{align*}
    N_L = \sum_{i=1}^{u(1)} \Delta_L(L_1^i), &&
    N_R = \sum_{i=1}^{u(n)} \Delta_R(L_n^i).
\end{align*}

We are interested in the event where exactly one particle is lost during stabilization, stabilizing to the full configuration $\mathbbm{1}_{C_n}$.

\begin{definition}[Sandpile Gambler's Ruin Probabilities]
For $n \in \mathbb{N}$ and $k \in C_n$, we define the probability of exactly one particle exiting through the left boundary upon stabilizing $\mathbbm{1}_{C_n} + \delta_k$ as:
\begin{equation*}
    \sgrleft{n}{k} := \prob\Big( (\mathbbm{1}_{C_n} + \delta_k)^\circ = \mathbbm{1}_{C_n}, \, N_L = 1, \, N_R = 0 \Big).
\end{equation*}
Analogously, we define $\sgrright{n}{k}$ as the probability that exactly one particle exits through the right boundary ($N_L = 0, N_R = 1$), and $\sgrboth{n}{k}$ as the probability that exactly two particles exit the system ($N_L = 1, N_R = 1$).
\end{definition}

For completeness, we extend the domain of these functions to the boundaries by setting $\sgrleft{n}{0} = 1$, $\sgrleft{n}{n+1} = 0$, and similarly $\sgrright{n}{0} = 0$, $\sgrright{n}{n+1} = 1$ as well as $\sgrboth{n}{0}=\sgrboth{n}{n+1}=0.$
\begin{lemma}[Recurrence Relation]\label{lemma:sgr-recurrence}
    For $n \in \N$ and $k \in C_n$, the probabilities $(\sgrleft{n}{k})_{n\in\N,k\in C_n}$ satisfy the recurrence relation
    \begin{equation*}\label{eq:sgr-recurrence}
        \sgrleft{n}{k} = \frac{1}{3}\bigg(\sgrleft{n}{k-1} + \sgrleft{n}{k+1} + \sgrleft{n-k}{1}\sgrleft{n}{k-1}\bigg),
    \end{equation*}
    with boundary conditions $\sgrleft{n}{0} = 1$ and $\sgrleft{n}{n+1} = 0$.
\end{lemma}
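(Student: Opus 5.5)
We condition on the outcome of the very first toppling and then use the abelian property to choose a convenient order for the remaining topplings. Start from $\one_{C_n}+\delta_k$. The only unstable vertex is $k$, which holds $2$ particles, so we topple $k$ once. The three outcomes of $L_k^1$ each have probability $1/3$, which gives the three terms of the recurrence.

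\emph{Left only.} Vertex $k$ keeps one particle and $k-1$ receives one. If $k\geq 2$, the configuration is $\one_{C_n}+\delta_{k-1}$. From this point the event $\{(\cdot)^\circ=\one_{C_n},N_L=1,N_R=0\}$ has probability $\sgrleft{n}{k-1}$, by the strong Markov property of the i.i.d. toppling instructions. If $k=1$, one particle has already left through $0$, the configuration is $\one_{C_n}$ and stable, and the event has occurred. This matches the convention $\sgrleft{n}{0}=1$.

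\emph{Right only.} Symmetrically, this contributes $\sgrright{}{}$-free mass $\sgrleft{n}{k+1}$, with the convention $\sgrleft{n}{n+1}=0$ when $k=n$, since then the particle exits on the right and $N_R\ge 1$.

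\emph{Both directions.} The configuration becomes $\one_{C_n}-\delta_k+\delta_{k-1}+\delta_{k+1}$. By the abelian property we first stabilize only inside $\{k+1,\dots,n\}$, leaving everything left of $k$ untouched. Vertex $k$ holds $0$ particles, so it stays stable as long as it receives at most one particle. Seen from $\{k+1,\dots,n\}$, vertex $k$ therefore acts as a sink. The restricted dynamics are exactly those of $\one_{C_{n-k}}+\delta_1$ on a shifted copy of $C_{n-k}$, driven by independent instructions.

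\begin{itemize}
\item If this sub-stabilization ends with exactly one particle sent to $k$, none sent to $n+1$, and the block $\{k+1,\dots,n\}$ full, then $k$ is refilled to $1$. This has probability $\sgrleft{n-k}{1}$. The total configuration is now $\one_{C_n}+\delta_{k-1}$, with nothing lost to the right. By independence of the unused instructions, the rest of the event has conditional probability $\sgrleft{n}{k-1}$. This yields the product term $\sgrleft{n-k}{1}\sgrleft{n}{k-1}$.
\item Every other outcome of the sub-stabilization sends at least one particle through $n+1$, so $N_R\ge 1$ and it contributes nothing.
\end{itemize}

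For $k=n$ the right block is empty and the particle sent right is lost. This requires the convention $\sgrleft{0}{1}:=0$, which agrees with $\sgrleft{0}{0+1}=0$.

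The main obstacle is the claim used in the third case: when $\one_{C_m}+\delta_1$ is stabilized on $C_m$ with no loss through the right boundary, exactly one particle leaves on the left and the final configuration is full. This rules out two particles entering $k$, which would make $k$ unstable and break the sink identification, and it rules out ending with a hole.

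I would prove this by first establishing the invariant that every configuration reachable from $\one_{C_m}+\delta_j$ is either $\one+\delta_i$, or $\one-\delta_i+\delta_{i-1}+\delta_{i+1}$ (with boundary-lost particles dropped), or lies in $\mathcal R$. This follows by induction on the number of topplings, using the same one-step case analysis as above. From the invariant, total mass loss is at most $2$. A loss of $2$ can only occur via the "both" toppling of a single-hole configuration, and that necessarily sends one particle through each boundary. Hence if $N_R=0$, exactly one particle is lost, the final configuration is full, and in our setting $k$ receives exactly one particle.
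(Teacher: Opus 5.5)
Your decomposition is the same as the paper's: condition on the first toppling at $k$, and in the split case stabilize $\{k+1,\dots,n\}$ first with $k$ acting as a sink. Your explicit treatment of $k=1$ and $k=n$ (including $\sgrleft{0}{1}=0$) is a welcome addition. You also correctly isolate the fact that the paper only asserts (``the single excess particle at $k+1$ must exit the sub-segment to its left''): if the right block loses nothing through $n+1$, then exactly one particle enters $k$ and the block ends full. This fact is genuinely needed, because outcomes in which two particles enter $k$ would contribute terms absent from the recurrence. (Postponing topplings at $k$ keeps the sink identification valid either way. What would break is that the configuration afterwards is no longer $\one_{C_n}+\delta_{k-1}$.)

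The gap is in your proof of that fact: the invariant you propose is false. On $C_5$, a split at $3$ turns $\one_{C_5}+\delta_3$ into $(1,2,0,2,1)$. A right-only toppling at $4$ then gives $(1,2,0,1,2)=\one_{C_5}-\delta_3+\delta_2+\delta_5$, which is on none of your lists. A left-only toppling at $2$ fails in the same way, so no choice of order rescues the invariant. Continuing with a left-only toppling at $2$ and a split at $1$ even produces $(0,2,0,1,2)$, which has two holes. Further splits at $5,2,4,3$ then end in $(1,1,0,1,1)$, with the two particles lost at different topplings. So neither ``loss at most $2$'' nor $N_L\le 1$ follows from your case analysis.

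A correct invariant is on edge currents rather than on configurations. When stabilizing $\one_{C_m}+\delta_j$, let $F(x)$, $0\le x\le m$, be the net number of particles moved from $x$ to $x+1$. Then $\eta(v)=1+\one_{\{v=j\}}+F(v-1)-F(v)$, $N_L=-F(0)$ and $N_R=F(m)$. By induction on the number of topplings, in any order, $F(x)\in\{-1,0\}$ for $x<j$ and $F(x)\in\{0,1\}$ for $x\ge j$:
an unstable $v<j$ forces $F(v-1)=0$ and $F(v)=-1$, and a toppling at $v$ only lowers $F(v-1)$ and/or raises $F(v)$ by one;
the case $v>j$ is symmetric;
instability at $v=j$ forces $F(j-1)=F(j)=0$.
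Apply this with $j=1$ to the right block; there $F(0)$ counts particles entering $k$, since $k$ is not toppled. Then $N_R=0$ together with mass counting gives exactly one particle into $k$ and a full block, which closes your third case.
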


\begin{proof}
    Consider the first toppling of the unstable vertex $k$ in the configuration $\eta_0 = \one_{C_n} + \delta_k$.
    With probability $1/3$, the particle is sent to $k-1$, resulting in the configuration $\one_{C_n} + \delta_{k-1}$. The probability of exactly one particle subsequently exiting left is, by definition, $\sgrleft{n}{k-1}$.
    Similarly, with probability $1/3$, the particle is sent to $k+1$, yielding $\one_{C_n} + \delta_{k+1}$ and a left-exit probability of $\sgrleft{n}{k+1}$.
    
    With probability $1/3$, the split toppling occurs, yielding the configuration $\eta_1 = \one_{C_n} - \delta_k + \delta_{k-1} + \delta_{k+1}$. By the Abelian property of the stochastic sandpile model, the final stabilized configuration is independent of the order of subsequent topplings. We may therefore choose to stabilize the sub-segment $\{k+1, \dots, n\}$ first. 
    In order for the global condition $N_R = 0$ to hold, this sub-stabilization must not eject any particles to the right boundary $n+1$. Thus, the single excess particle at $k+1$ must exit the sub-segment to its left, falling into the hole at $k$. This independent sub-stabilization behaves identically to a system of size $n-k$ with a particle added at relative position $1$. This event occurs with probability $\sgrleft{n-k}{1}$.
    
    Conditional on this event occurring, the hole at $k$ is filled, and the right sub-segment is fully stabilized at $\one_{\{k+1, \dots, n\}}$. The remaining unstable configuration on $C_n$ is exactly $\one_{C_n} + \delta_{k-1}$. The probability that the stabilization of this remaining configuration results in a single left exit is $\sgrleft{n}{k-1}$. The joint probability of this split sequence is therefore $\sgrleft{n-k}{1}\sgrleft{n}{k-1}$. Summing the probabilities of these three mutually exclusive initial toppling outcomes yields the recurrence.
\end{proof}

\begin{proposition}[Explicit Solution of the Sandpile Gambler's Ruin]\label{prop:sgr}
    For all $n \in \N$ and $k \in C_n$, the probabilities of exactly one particle exiting the left or right boundary, or exactly two particles exiting, are given respectively by:
    \begin{align*}
        \sgrleft{n}{k} = \frac{(n-k+1)(n-k+2)}{(n+1)(n+2)}, &&
        \sgrright{n}{k} = \frac{k(k+1)}{(n+1)(n+2)}, &&
        \sgrboth{n}{k} = \frac{2k(n-k+1)}{(n+1)(n+2)}.
    \end{align*}
\end{proposition}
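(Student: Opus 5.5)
We prove the formula for $\sgrleft{n}{k}$ by strong induction on $n$, using the recurrence of Lemma \ref{lemma:sgr-recurrence}. We then get $\sgrright{n}{k}$ by reflection and $\sgrboth{n}{k}$ by complementation.

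\textbf{Step 1 (the factor $\sgrleft{n-k}{1}$).} The recurrence for $\sgrleft{n}{\cdot}$ involves only $\sgrleft{m}{1}$ with $m=n-k<n$, together with values $\sgrleft{n}{\cdot}$ at the same size $n$. We therefore induct on $n$. The inductive hypothesis gives
\[
\sgrleft{m}{1}=\frac{m(m+1)}{(m+1)(m+2)}=\frac{m}{m+2}
\]
for $1\le m<n$. For $m=0$, i.e.\ $k=n$, the sub-segment is empty. The excess particle at $k+1=n+1$ has already left through the right boundary, so the event $N_R=0$ fails. This matches the convention that the corresponding factor is $0=\frac{0}{2}$, and we should check that the recurrence is indeed meant with this reading at $k=n$ (where $\sgrleft{n}{n+1}=0$). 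So in all cases we use $\sgrleft{n-k}{1}=\frac{n-k}{n-k+2}$.

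\textbf{Step 2 (uniqueness).} For fixed $n$, the recurrence
\[
3f(k)=f(k+1)+\Big(1+\tfrac{n-k}{n-k+2}\Big)f(k-1),\qquad f(0)=1,\quad f(n+1)=0,
\]
is a linear second-order boundary value problem. Its solution is unique: the homogeneous version with $f(0)=f(n+1)=0$ has only the trivial solution. Indeed, the operator is diagonally dominant, since the coefficients $\frac13$ and $\frac13\big(1+\frac{n-k}{n-k+2}\big)$ sum to strictly less than $1$, so a maximum-principle argument on $|f|$ applies. Alternatively, the recursion started at $f(0)=0$ and $f(1)=c$ propagates linearly, and one checks it cannot vanish at $n+1$ unless $c=0$. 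The maximum principle is the cleaner route. Hence it suffices to verify that $f(k)=\frac{(n-k+1)(n-k+2)}{(n+1)(n+2)}$ satisfies the boundary conditions and the recurrence.

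\textbf{Step 3 (verification).} The boundary values are immediate: $f(0)=1$ and $f(n+1)=0$. For the recurrence, put $j=n-k$ and multiply by $(n+1)(n+2)$. The claim becomes
\[
3(j+1)(j+2)=j(j+1)+\frac{2j+2}{j+2}\,(j+2)(j+3)=j(j+1)+2(j+1)(j+3).
\]
The right-hand side equals $(j+1)(3j+6)$, which is the left-hand side. This is the main computation; it is routine once Step 1 is settled.

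\textbf{Step 4 (the other two probabilities).} By the reflection $v\mapsto n+1-v$, which preserves the uniform toppling law, we get $\sgrright{n}{k}=\sgrleft{n}{n+1-k}=\frac{k(k+1)}{(n+1)(n+2)}$. For $\sgrboth{n}{k}$, start from $\one_{C_n}+\delta_k$: the total number of particles is $n+1$ and every stable configuration holds at most $n$ particles. So at least one particle is lost, and stabilization almost surely ends in exactly one of three cases:
\begin{itemize}
\item the full configuration with $N_L=1$, $N_R=0$;
\item the full configuration with $N_L=0$, $N_R=1$;
\item a configuration with one hole, with $N_L=N_R=1$.
\end{itemize}
Here $N_L\le1$ and $N_R\le1$, because a boundary vertex can eject a second particle only if it is unstable again after having lost one, which would require the system to hold more than $n$ particles on $C_n$ at that time; this needs a short argument. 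Granting this, $\sgrboth{n}{k}=1-\sgrleft{n}{k}-\sgrright{n}{k}$, and an algebraic simplification gives $\frac{2k(n-k+1)}{(n+1)(n+2)}$.

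\textbf{Main obstacle.} The delicate parts are two justifications rather than the algebra. The first is the boundary case $k=n$ in Step 1. The second is the claim in Step 4 that the three events exhaust the probability space, i.e.\ that no more than two particles can leave and never two through the same side. I expect to handle the latter by the particle-count invariant: any configuration reached during stabilization has all heights in $\{0,1,2\}$ with at most one extra particle beyond the full state.
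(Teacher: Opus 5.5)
Your proposal is correct in its main line and follows the paper's proof essentially step for step. The paper also uses induction on $n$, with $\sgrleft{n-k}{1}=\frac{n-k}{n-k+2}$ as the only input from smaller systems. It then gets uniqueness of the size-$n$ tridiagonal system from diagonal dominance, substitutes the ansatz directly, reflects to get $\sgrright{n}{k}$, and obtains $\sgrboth{n}{k}$ by complementation. Your explicit reading of the split factor as $0$ at $k=n$ is something the paper leaves implicit in $g_0(1)=0$.

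The one point where your sketch would not go through as written is the justification of $N_L\le 1$ and $N_R\le 1$ in Step 4. The paper simply takes this for granted when it writes $\sgrboth{n}{k}=1-\sgrleft{n}{k}-\sgrright{n}{k}$. Counting particles cannot do it. Heights in $\{0,1,2\}$ and at most $n+1$ particles are compatible with the configuration $(2,0,1,\dots,1)$. That configuration has exactly the $n$ particles remaining after one left exit, yet it is unstable at vertex $1$.

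What works is an ordering invariant: reading left to right, the sites of height $\neq 1$ alternate between height $2$ and height $0$. You can check that each of the three toppling outcomes at a site of height $2$ preserves this. In particular, two sites of height $2$ are never adjacent, and this is also what keeps all heights in $\{0,1,2\}$. The argument then runs in three steps:
\begin{enumerate}
\item Before the first left exit, the leftmost site of height $\neq 1$ has height $2$.
\item The exit itself (a left move or a split at vertex $1$) leaves a site of height $0$ as the leftmost site of height $\neq 1$.
\item From then on, the leftmost such site always has height $0$, or there is none. No site of height $2$ can be created to its left, and filling it only exposes the next site of height $0$.
\end{enumerate}
Hence vertex $1$ never becomes unstable again, and symmetrically for vertex $n$. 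With this invariant in place, your three cases are indeed exhaustive and the complementation step is justified.
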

\begin{proof}
    We prove this by defining the explicit sequence $$g_n(k) := \frac{(n-k+1)(n-k+2)}{(n+1)(n+2)},$$ and showing that it uniquely satisfies the recurrence relation established in Lemma \ref{lemma:sgr-recurrence}.

    We first show that the recurrence relation uniquely determines the $\sgrleft{n}{k}$ by induction on the system size $n$. For the base case $n=1$, the value $\sgrleft{1}{1} = \frac{1}{3}$, is determined by the boundary conditions. Assume that the probabilities are uniquely determined for all system sizes $m < n$. For system size $n$, the recurrence equation from Lemma \ref{lemma:sgr-recurrence} defines a tridiagonal linear system for the unknown variables $(\sgrleft{n}{k})_{k\in C_n}$, where the coefficients depend on the uniquely determined value $\sgrleft{n-k}{1}$. Because these coefficients are sub-stochastic, the underlying matrix is irreducibly diagonally dominant. Therefore, the linear system is non-singular, guaranteeing that the solution for size $n$ is unique. Thus, any sequence satisfying the recurrence and boundary conditions must be the true probability $(\sgrleft{n}{k})_{k\in C_n}$.

    We will now verify that our proposed function $g_n$ satisfies the recurrence. The boundary conditions are trivially satisfied: $$g_n(0) = \frac{(n+1)(n+2)}{(n+1)(n+2)} = 1 \quad \text{ and } \quad g_n(n+1) = \frac{0}{(n+1)(n+2)} = 0.$$ 
    To show that the recurrence itself is satisfied, we substitute the function $g_n$ into the right hand side of the recurrence equation from Lemma \ref{lemma:sgr-recurrence}. For brevity set $m = n-k$. By the definition of our proposed sequence, the split-term evaluates to $$g_m(1) = \frac{(m-1+1)(m-1+2)}{(m+1)(m+2)} = \frac{m(m+1)}{(m+1)(m+2)} = \frac{m}{m+2}.$$ 
    Substituting the function $g$ into the recurrence equation of Lemma \ref{lemma:sgr-recurrence} now yields
    \begin{align*}
        \frac{1}{3} &\Big( g_n(k-1) + g_n(k+1) + g_m(1)g_n(k-1) \Big) \\
        &= \frac{1}{3(n+1)(n+2)} \bigg[ (m+2)(m+3) + m(m+1) + \left(\frac{m}{m+2}\right)(m+2)(m+3) \bigg] \\
        &= \frac{1}{3(n+1)(n+2)} \bigg[ (m^2 + 5m + 6) + (m^2 + m) + (m^2 + 3m) \bigg] \\
        &= \frac{1}{3(n+1)(n+2)} \bigg[ 3m^2 + 9m + 6 \bigg] \\
        &= \frac{3(m+1)(m+2)}{3(n+1)(n+2)} = g_n(k),
    \end{align*}
    as desired. Since $g_n(k)$ satisfies the recurrence and the solution to the recurrence equation must be unique, we obtain $\sgrleft{n}{k} = g_n(k)$ for all $k\in C_n$ as well. This finishes the inductive step and thus proves the closed-form expression of the probabilities $(\sgrleft{n}{k})_{n\in\N,k\in C_n}$.

    By symmetry of the uniform toppling rules, the probability of a right exit is given by $\sgrright{n}{k} = \sgrleft{n}{n-k+1}$. Substituting this into our proven formula immediately verifies the closed-form expression of right-exits. 

    Finally, observe that the probability of losing exactly two particles is $\sgrboth{n}{k} = 1 - \sgrleft{n}{k} - \sgrright{n}{k}$ for all $n\in\N$ and $k\in C_n$. Substituting the explicit expressions for $\sgrleft{n}{k}$ and $\sgrright{n}{k}$ gives 
     $$\sgrboth{n}{k} = \frac{2k(n-k+1)}{(n+1)(n+2)},$$
     as desired.
\end{proof}

\begin{remark}[Namesake of the Sandpile Gambler's Ruin]
    We chose to call the problem analyzed in this section the {\bf Sandpile Gambler's Ruin}, not only due to its loose resemblance of the classical gambler's ruin problem, but because the classical gambler's ruin problem can be realized as an analogue statement for a different toppling rule.

    Consider for $n\in\N$ and every $i\in C_n$ the random toppling distribution $L^{\text{SRW}}_i$ given by
    \begin{align*}
        \prob(L^{\text{SRW}}_i=\delta_{i-1})=\prob(L^{\text{SRW}}_i=\delta_{i+1})=\frac{1}{2}.
    \end{align*}
    That is, upon toppling any vertex, exactly one particle is sent to the right or left with equal probablity. If we now stabilize the full configuration plus one additional particle with this toppling rule, the excess particle will perform a simple random walk on $C_n$ until it either leaves through the left or right side of $C_n$. Analyzing the probabilities of these two events is known as the \textbf{Classical Gambler's Ruin} problem.
\end{remark}

\subsection{Concentration on the Full Configuration}\label{sec:conc-on-full}
With the results on the sandpile gambler's ruin from Subsection \ref{sec:gambler}, we can now find the transition probabilities to the full configuration. We will do so in the following lemma.
\begin{lemma}[Transition Probabilities to Full Configuration]\label{lemma:transition-to-full}
    Let $n\in\N$ and consider the transition matrix $\trans{n}$ of the stochastic sandpile Markov chain on the set $C_n$. Then:
    \begin{enumerate}
        \item For all $j\in C_n$ it holds that$$\trans{n}(\one_{C_n}-\delta_j,\one_{C_n})=\frac{n+2}{3n}.$$
        \item The full configuration moves to itself with probability given by$$\trans{n}(\one_{C_n},\one_{C_n})=\frac{2}{3}.$$
    \end{enumerate}
\end{lemma}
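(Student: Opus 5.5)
Both parts come down to conditioning on where the new particle is dropped and then applying Proposition \ref{prop:sgr}. The starting observation: the stabilized result is the full configuration exactly when the number of lost particles equals the initial excess over $\one_{C_n}$. From $\one_{C_n}$ plus one particle, the stable outcomes are $\one_{C_n}$ with one particle lost, or $\one_{C_n}-\delta_i$ with two particles lost. No other outcome is possible, since the chain stays in $\mathcal{R}_{n}$.

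\textbf{Part 2.} Adding a particle at $k$ to $\one_{C_n}$, the chain returns to $\one_{C_n}$ exactly when one particle is lost, which has probability $\sgrleft{n}{k}+\sgrright{n}{k}=1-\sgrboth{n}{k}$. Averaging over $k$ with weight $1/n$ gives
\[
\trans{n}(\one_{C_n},\one_{C_n})=1-\frac{2}{n(n+1)(n+2)}\sum_{k=1}^n k(n-k+1).
\]
Using $\sum_{k=1}^n k(n+1-k)=\frac{n(n+1)(n+2)}{6}$, this equals $1-\frac13=\frac23$.

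\textbf{Part 1.} Start from $\one_{C_n}-\delta_j$ and add a particle at a uniform site $k$.

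\emph{Case $k=j$.} The result is already $\one_{C_n}$ and stable, so this contributes $1/n$.

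\emph{Case $k\neq j$.} Use the abelian property to stabilize only on the segment containing $k$ that is bounded by the hole at $j$. Suppose $k<j$, so the segment is $\{1,\dots,j-1\}$, of size $j-1$, with the hole at $j$ acting as its right sink. The configuration there is $\one+\delta_k$.
\begin{itemize}
\item We reach $\one_{C_n}$ exactly when the excess particle exits through the right end of the segment and nothing exits on the left. The particle then fills the hole, everything is stable, and this has probability $\sgrright{j-1}{k}$.
\item The other outcomes are: one particle exits left (the chain ends at $\one_{C_n}-\delta_j$), or two particles exit, one into $j$ and one to the sink. In the second outcome a new hole appears inside the segment while $j$ is filled, so the configuration is stable and not full.
\end{itemize}
The one point needing care is that in the segment stabilization a particle falling into $j$ does not make $j$ unstable, because $j$ holds at most one particle. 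Hence the sub-segment stabilization coincides with the global one, and Proposition \ref{prop:sgr} applies verbatim. The case $k>j$ is symmetric, on the segment of size $n-j$ with the hole on its left, and gives $\sgrleft{n-j}{k-j}$.

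Summing everything,
\[
\trans{n}(\one_{C_n}-\delta_j,\one_{C_n})=\frac1n\Big(1+\sum_{k=1}^{j-1}\frac{k(k+1)}{j(j+1)}+\sum_{i=1}^{n-j}\frac{(n-j-i+1)(n-j-i+2)}{(n-j+1)(n-j+2)}\Big).
\]
Since $\sum_{k=1}^{m}k(k+1)=\frac{m(m+1)(m+2)}{3}$, each of the two sums equals one third of its segment length: the first is $\frac{j-1}{3}$ and the second, after reindexing, is $\frac{n-j}{3}$. The total is therefore $\frac1n\big(1+\frac{n-1}{3}\big)=\frac{n+2}{3n}$, which is independent of $j$ as claimed.

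The main obstacle is justifying the reduction to the sub-segment, that is, that the hole behaves as a sink for the first particle. I would argue it via the abelian property, stabilizing inside the segment first exactly as in the proof of Lemma \ref{lemma:sgr-recurrence}. The edge cases $j=1$ and $j=n$, where one segment is empty, are handled by the convention that empty sums vanish.
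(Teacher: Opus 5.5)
Your proof is correct and follows the paper's argument essentially verbatim. For Part 1 you condition on the drop site and treat the hole at $j$ as a sink for the sub-segment containing the new particle, which gives $\sgrright{j-1}{k}$ and $\sgrleft{n-j}{k-j}$; for Part 2 you average $1-\sgrboth{n}{k}$ over $k$. Your remark that $j$ receives at most one particle, so the sub-segment stabilization already yields a globally stable configuration, makes explicit a step the paper leaves implicit.
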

\begin{proof}
    We will first analyze the transition probabilities of the vacant configurations to the full configuration. So let $n\in\N$ and $j\in C_n$, and consider the transition from $(\one_{C_n}-\delta_j)$ to $\one_{C_n}$. Every step of the stochastic sandpile Markov chain consists of first placing a particle at a uniformly chosen location in $C_n$. If the particle is placed at vertex $j$, then the hole at $j$ is filled and we moved to the full configuration. This event contributes a probability of $1/n$ to the transition probability.

    Secondly, if the additional particle is placed at a vertex $i\in C_n$ with $i<j$, i.e. it is placed to the left side of the $0$, then we can first stabilize the configuration $(\one_{C_n}-\delta_j+\delta_i)$ on the vertices $\{1,...,j-1\}.$ During this stabilization, if we want the final stable configuration to be the full configuration $\one_{C_n}$, exactly one particle must leave the set $\{1,...,j-1\}$, and it must do so through the right side. This corresponds to the sandpile gambler's ruin probability $\sgrright{j-1}{i}$. We thus arrive at a contribution to the transition probability of
    $$\frac{1}{n}\sum_{i=1}^{j-1}\sgrright{j-1}{i}=\frac{1}{n}\sum_{i=1}^{j-1}\frac{i(i+1)}{3j(j+1)}=\frac{(j-1)j(j+1)}{3nj(j+1)}=\frac{j-1}{3n},$$
    where we have used the closed-form expression of $\sgrright{j-1}{i}$ as obtained in Proposition \ref{prop:sgr}.

    Finally, if the particle is placed to the right side of $j$, we obtain through analogue arguments the contribution to the transition probability given by
    $$\frac{1}{n}\sum_{i=1}^{n-j}\sgrleft{n-j}{i}=\frac{1}{n}\sum_{i=1}^{j-1}\frac{(n-j-i+1)(n-j-i+2)}{3(n-j+1)(n-j+2)}=\frac{(n-j)(n-j+1)(n-j+2)}{3n(n-j+1)(n-j+2)}=\frac{n-j}{3n}.$$
    If we now combine all of these three contributions to the transition probability, we obtain
    $$\trans{n}(\one_{C_n}-\delta_j,\one_{C_n})=\frac{1}{n}+\frac{j-1}{3n}+\frac{n-j}{3n}=\frac{n+2}{3n}.$$

    Let us now consider the transition probability of the full configuration to itself. For this event to occur, if a particle is placed at some vertex $i\in C_n$, we must then sent out exactly one particle during the stabilization of the configuration $(\one_{C_n}+\delta_i)$, either through the left or the right. The probability of this event is again given by the sandpile gambler's ruin as
    $$\sgrleft{n}{i}+\sgrright{n}{i}=1-\sgrboth{n}{i}.$$
    Using now again the closed-form expressions obtain in Proposition \ref{prop:sgr} we obtain
    $$\trans{n}(\one_{C_n},\one_{C_n})=\frac{1}{n}\sum_{i=1}^n\bigg(1-\sgrboth{n}{i}\bigg)=1-\frac{1}{n}\sum_{i=1}^n\frac{2i(n-i+1)}{(n+1)(n+2)}=1-\frac{1}{3}=\frac{2}{3}.$$
\end{proof}

With these transition probabilities, we can now prove \ref{USS1} from Theorem \ref{thm:main}. We will do so in the following proposition, which will be phrased on $C_n$.

\begin{proposition}[Stationary Probability of Full Configuration]\label{prop:concentration-on-full}
    For all $n\in\N$ it holds that
    $$\statn{n}(\one_{C_n})=\frac{1}{2}+\frac{1}{2(n+1)}.$$
\end{proposition}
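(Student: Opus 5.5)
The plan is to use the stationarity equation at the single state $\one_{C_n}$ together with the transition probabilities from Lemma \ref{lemma:transition-to-full}. The useful feature of that lemma is that $\trans{n}(\one_{C_n}-\delta_j,\one_{C_n})=\frac{n+2}{3n}$ does not depend on $j$. Consequently, the unknown weights of the individual vacant configurations only enter through their sum, and the balance equation at $\one_{C_n}$ closes up into one scalar equation.

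Concretely, write $x:=\statn{n}(\one_{C_n})$. Since $\statn{n}$ is supported on $\mathcal{R}_{1,n}$, the total weight of the vacant configurations is
$$\sum_{j=1}^n\statn{n}(\one_{C_n}-\delta_j)=1-x.$$
Stationarity evaluated at $\one_{C_n}$ then reads
$$x=\sum_{\eta\in\mathcal{R}_{1,n}}\statn{n}(\eta)\,\trans{n}(\eta,\one_{C_n})=\frac{2}{3}\,x+\frac{n+2}{3n}\,(1-x),$$
where the last equality uses both parts of Lemma \ref{lemma:transition-to-full}. Multiplying by $3n$ gives $3nx=2nx+(n+2)(1-x)$, hence $x(2n+2)=n+2$, that is,
$$x=\frac{n+2}{2(n+1)}=\frac12+\frac{1}{2(n+1)}.$$

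The only point that needs care is justifying that the stationary distribution exists, is unique, and is supported on $\mathcal{R}_{1,n}$. The chain on stable configurations is finite. From every stable configuration, adding particles repeatedly reaches $\one_{C_n}$ with positive probability. Moreover, $\mathcal{R}_{1,n}$ is a single closed communicating class, as the paper notes. So the stationary distribution is unique and concentrated on $\mathcal{R}_{1,n}$, which is exactly what the balance equation uses.

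No other obstacle is expected, since all the probabilistic work is already contained in Lemma \ref{lemma:transition-to-full} via Proposition \ref{prop:sgr}.
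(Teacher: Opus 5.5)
Your proposal is correct and matches the paper's proof: write the balance equation at $\one_{C_n}$, use the $j$-independence in Lemma \ref{lemma:transition-to-full} to replace the total weight of the vacant configurations by $1-\statn{n}(\one_{C_n})$, and solve the resulting linear equation. Your remark that the stationary distribution exists, is unique and is supported on $\mathcal{R}_{1,n}$ is a justification the paper leaves implicit.
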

\begin{proof}
    Let $n\in\N$. By the definition of stationary measures we obtain
    \begin{align}\label{eq:stat-of-full}
    \statn{n}(\one_{C_n})=\statn{n}(\one_{C_n})\trans{n}(\one_{C_n},\one_{C_n})+\sum_{i=1}^n\statn{n}(\one_{C_n}-\delta_i)\trans{n}(\one_{C_n}-\delta_i,\one_{C_n}).
    \end{align}
    Using Lemma \ref{lemma:transition-to-full} we have
    \begin{align*}
        \sum_{i=1}^n\statn{n}(\one_{C_n}-\delta_i)\trans{n}(\one_{C_n}&-\delta_i,\one_{C_n})=\sum_{i=1}^n\statn{n}(\one_{C_n}-\delta_i)\frac{n+2}{3n}\\&=\frac{n+2}{3n}\sum_{i=1}^n\statn{n}(\one_{C_n}-\delta_i)=\frac{n+2}{3n}\big(1-\statn{n}({\one_{C_n}})\big).
    \end{align*}
    Using this in Equation (\ref{eq:stat-of-full}) together with the value of $\trans{n}(\one_{C_n},\one_{C_n})$ from Lemma \ref{lemma:transition-to-full} we obtain
    $$\statn{n}(\one_{C_n})=\frac{n+2}{3n}+\frac{n-2}{3n}\statn{n}(\one_{C_n}).$$
    After rearranging this equation we then see
    $$\frac{2n+2}{3n}\statn{n}(\one_{C_n})=\frac{n+2}{3n},$$
    from which we finally arrive at
    $$\statn{n}(\one_{C_n})=\frac{1}{2}+\frac{1}{2(n+1)},$$
    which completes our proof.
\end{proof}
\section{Infinite Volume Limit}\label{sec:infinite-volume-limit}
In this section, we want to show that the infinite volume limit of stationary distributions on finite line segments exists, and that it is given by the Dirac measure of the full configuration on the integers. For this end, we need to get a handle on the stationary probabilties of the vacant configurations, where exactly one vertex has $0$ particles, whereas all the other vertices have $1$ particle.

It turns out, that we can not only obain suitable bounds for the stationary probabilities of the vacant configurations, but we can in fact calculate the stationary distribution on finite line segments exactly. To do so, we need to refine our analysis of the sandpile gambler's ruin from Section \ref{sec:gambler}. So far, we have only found the probabilities of losing exactly one particle, either to the left or to the right, or of losing two particles during stabilization of the full configuration plus one additional particle. In order to gain a grasp on the transition probabilities to vacant configurations - which are needed to obtain the stationary distribution - we however need to find out, where the vertex with $0$ particles is upon losing two particles in the stabilization.

We will take a similar approach as we did in Section \ref{sec:gambler}, that is, we will first find a recurrence equation for the hole probabilities - which are the probabilities for the location of the vertex with $0$ particles - which we will then solve to obtain the desired probabilities. This time, the recurrence equation will be more complicated than the one from \ref{lemma:sgr-recurrence}, however it can still be solved with a closed-form solution.

We will calculate the hole probabilites via a suitable recurrence equation in Section \ref{sec:holes}, which we will then use to calculate the stationary distribution on vacant configurations in Section \ref{sec:vacant}. Finally, we can use the results to prove the infinite volume limit \ref{USS3} in Section \ref{sec:proof-infinite-volume}.

\subsection{Hole Probabilities}\label{sec:holes}

Throughout this section, we will again work on $C_n$ for ease of readability. The results carry over to arbitrary line segments $[[a,b]]$ for $a,b\in\Z$ with $a<b$. When the stabilization of $\one_{C_n} + \delta_i$ loses exactly two particles, the resulting configuration is missing exactly one particle, that is, there will be a single vertex with $0$ particles on it. We will denote this vertex as a {\it hole}. To determine the transition probabilities of the Markov chain, we must precisely track the spatial location of this newly created hole.

\begin{definition}[Hole Probability]
    For $n \in \N$ and $i, j \in C_n$, we define $\mathsf{h}_n^j(i)$ as the probability that stabilizing $\one_{C_n} + \delta_i$ results in a single hole located exactly at $j$:
    \begin{equation*}
        \mathsf{h}_n^j(i) := \prob\Big( (\one_{C_n} + \delta_i)^\circ = \one_{C_n} - \delta_j \Big).
    \end{equation*}
\end{definition}

In order to solve for $\mathsf{h}_n^j(i)$, we establish two distinct recurrence relations by decomposing the domain into sub-segments and applying the abelian property.

\begin{lemma}[Hole Recurrence Relations]\label{lemma:hole-recurrence}
    For $n \ge 2$, the hole probabilities satisfy the following relations. 
    For $i < n$, we have the bulk recurrence:
    \begin{equation*}
        \mathsf{h}_n^j(i) = \sgrright{n-1}{i}\mathsf{h}_n^j(n) + \sum_{x=1}^{j-1} \mathsf{h}_{n-1}^x(i)\mathsf{h}_{n-x}^{j-x}(n-x) + \mathsf{h}_{n-1}^j(i)\sgrright{n-j}{n-j}.
    \end{equation*}
    For $i=n$, we have the boundary recurrence:
    \begin{equation*}
        \mathsf{h}_n^j(n) = 
        \begin{cases}
            \frac{1}{3}\mathsf{h}_n^j(n-1) + \frac{1}{3}\mathsf{h}_{n-1}^j(n-1) & \text{if } j < n, \\
            \frac{1}{3}\mathsf{h}_n^n(n-1) + \frac{1}{3}\sgrleft{n-1}{n-1} & \text{if } j = n.
        \end{cases}
    \end{equation*}
\end{lemma}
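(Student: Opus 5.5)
The plan is to condition on the outcome of a first stage of stabilization and then use the abelian property to restart on a sub-segment, exactly as in the proof of Lemma \ref{lemma:sgr-recurrence}. Two facts will be used throughout, as in the proof of Proposition \ref{prop:sgr}.
\begin{enumerate}
\item Stabilizing the full configuration plus one particle on a segment loses either exactly one particle (to the left or to the right) or exactly two particles (one through each side). Hence $\sgrleft{}{}+\sgrright{}{}+\sgrboth{}{}=1$, and in the two-loss case the result is a single hole.
\item If a vertex holding $0$ particles borders a segment that we stabilize first, it acts as a sink for that sub-stabilization. It absorbs at most one particle, and in doing so becomes a stable vertex holding $1$ particle.
\end{enumerate}
Sub-stabilizations on disjoint sets of vertices use fresh toppling instructions, so, conditional on the outcome of the first stage, the second stage has the law of a stabilization on a smaller segment. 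This independence lets the probabilities of the two stages multiply.

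\textbf{Bulk recurrence ($i<n$).} Start from $\one_{C_n}+\delta_i$ and first stabilize on $C_{n-1}$, treating $n$ as a sink; this is allowed by the abelian property, which lets us postpone toppling $n$. There are three cases according to the outcome on $C_{n-1}$.
\begin{enumerate}
\item Exactly one particle exits left. It is lost at $0$ and the final configuration is $\one_{C_n}$, so there is no hole.
\item Exactly one particle exits right, which happens with probability $\sgrright{n-1}{i}$. The configuration is then $\one_{C_n}+\delta_n$, and the hole ends at $j$ with probability $\mathsf{h}_n^j(n)$.
\item Two particles exit, leaving a hole at some $x\in C_{n-1}$; this happens with probability $\mathsf{h}_{n-1}^x(i)$. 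The configuration is now $\one_{C_n}-\delta_x+\delta_n$. We next stabilize on $\{x+1,\dots,n\}$, a segment of size $n-x$ whose excess particle sits at relative position $n-x$, with $x$ acting as a sink by fact 2. There are three sub-cases.
\begin{enumerate}
\item A single left exit fills $x$ and gives the full configuration.
\item A single right exit, with probability $\sgrright{n-x}{n-x}$, leaves the hole at $x$.
\item A two-particle exit fills $x$ and creates a hole at relative position $j-x$, with probability $\mathsf{h}_{n-x}^{j-x}(n-x)$.
\end{enumerate}
Only $x<j$ (via sub-case (c)) or $x=j$ (via sub-case (b)) can produce a hole at $j$.
\end{enumerate}
Summing over these disjoint events gives exactly the three terms of the bulk recurrence.

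\textbf{Boundary recurrence ($i=n$).} Condition on the first toppling at $n$, as in Lemma \ref{lemma:sgr-recurrence}.
\begin{enumerate}
\item A left send, with probability $1/3$, gives $\one_{C_n}+\delta_{n-1}$, which contributes $\mathsf{h}_n^j(n-1)$.
\item A right send, with probability $1/3$, loses the particle and gives $\one_{C_n}$, so there is no hole.
\item A split, with probability $1/3$, loses one particle to the right and leaves $\one_{C_{n-1}}+\delta_{n-1}$ on $C_{n-1}$ with $0$ particles at $n$. We then stabilize on $C_{n-1}$ with $n$ acting as a sink. There are three sub-cases.
\begin{enumerate}
\item A single left exit, with probability $\sgrleft{n-1}{n-1}$, leaves the hole at $n$.
\item A single right exit fills $n$ and gives the full configuration.
\item A two-particle exit fills $n$ and puts the hole at $j\in C_{n-1}$, with probability $\mathsf{h}_{n-1}^j(n-1)$.
\end{enumerate}
Separating $j<n$ from $j=n$ yields the two cases of the boundary recurrence.
\end{enumerate}

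The step I expect to be the main obstacle is justifying the decomposition rigorously rather than the bookkeeping itself. Specifically, I must show that postponing the topplings at $n$, and respectively treating a $0$-vertex as an absorbing sink, does not change the law of the final configuration. I must also show that in each stage at most one particle crosses into the sink vertex, so that it becomes stable and never topples again. I will argue this via the abelian property together with fact 1 applied to each sub-segment: the sink vertex receives at most one particle from each side, and it can receive from only one side.
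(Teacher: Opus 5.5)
Your proposal is correct and follows the paper's proof essentially step for step. For the bulk recurrence you first stabilize $C_{n-1}$ while postponing topplings at $n$, then stabilize $\{x+1,\dots,n\}$ with the hole at $x$ acting as an absorbing boundary; for the boundary recurrence you condition on the first toppling at $n$. One small correction: the two stages in the bulk case do not act on disjoint vertex sets (both contain $\{x+1,\dots,n-1\}$), so the independence you need comes instead from the second stage using only not-yet-used toppling instructions, equivalently from the Markov property of the toppling process at the stopping time that ends the first stage.
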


\begin{proof}
    We prove the bulk recurrence by first stabilizing the sub-segment $\{1, \dots, n-1\}$.
    If exactly one particle exits this sub-segment to the left, the configuration becomes $\one_{C_n}$ and no hole is formed. If one particle exits to the right, it lands on $n$, yielding the configuration $\one_{C_n} + \delta_n$, which subsequently forms a hole at $j$ with probability $\mathsf{h}_n^j(n)$. Here, the event to send the particle to vertex $n$ has probabilty $\sgrright{n-1}{i}.$
    
    If two particles exit the sub-segment, a hole is formed at some $x \in \{1, \dots, n-1\}$ and the right-exiting particle lands on $n$. This event contributes a probability of $\mathsf{h}_{n-1}^x(i)$. We must now stabilize the configuration on the sub-segment $\{x+1, \dots, n\}$. 
    Crucially, if $x > j$, the hole at $x$ acts as an absorbing boundary. The stabilization of the right segment can emit at most one particle to the left; if it does, it fills the hole at $x$, resulting in the fully occupied configuration $\one_{C_n}$, with potentially a hole at some vertex in the set $\{x+1,\dots, n\}$. Thus, instability cannot propagate leftward past $x$ to create a hole at $j < x$.
    
    If $x < j$, the right segment acts as a system of size $n-x$ and must independently form a hole at relative position $j-x$, giving the sum over $x < j$. If $x = j$, the hole is already in the correct position, so the right segment must simply eject its excess particle to the right without filling $j$, which contributes a probability of $\sgrright{n-j}{n-j}$. Summing these mutually exclusive trajectories yields the bulk recurrence equation.
    
    To prove the boundary recurrence equation, we examine the first toppling at $n$. A right topple exits the system, stabilizing to $\one_{C_n}$, which has no hole. A left topple moves the particle to $n-1$, yielding a remaining probability of $\mathsf{h}_n^j(n-1)$ to form a hole at the desired location $j$. A split toppling creates a hole at $n$ and an excess particle at $n-1$. For $j < n$, stabilizing $\{1, \dots, n-1\}$ must then create a hole at $j$ and eject a particle to the right to fill $n$, which is an event with probability $\mathsf{h}_{n-1}^j(n-1)$. For $j=n$, the sub-segment must not eject a particle right, so it must exit left, yielding the probability $\sgrleft{n-1}{n-1}$.

    If we again sum all of the mutually exclusive probabilities that can occur as explained above, we obtain thus the boundary recurrence equation.
\end{proof}

We are now equipped to state the main result of this subsection: the spatial location of the hole is uniformly distributed across the domain.

\begin{proposition}[Uniformity of the Hole]\label{prop:hole-uniformity}
    For all $n \in \N$ and $i, j \in C_n$, the probability of a hole forming at $j$ is independent of $j$ and is given by:
    \begin{equation*}
        \mathsf{h}_n^j(i) = \frac{\sgrboth{n}{i}}{n} = \frac{2i(n-i+1)}{n(n+1)(n+2)}.
    \end{equation*}
\end{proposition}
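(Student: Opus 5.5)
We prove the formula by strong induction on $n$, using the two recurrences of Lemma \ref{lemma:hole-recurrence} together with the closed forms of Proposition \ref{prop:sgr}. Write $H_n(i):=\frac{2i(n-i+1)}{n(n+1)(n+2)}$ for the claimed value.

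\textbf{Base case and uniqueness.} For $n=1$ the only hole position is $j=1$. Hence $\mathsf{h}_1^1(1)=\sgrboth{1}{1}=1/3=H_1(1)$.

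Now fix $n\ge2$ and assume the claim for all sizes $m<n$. The unknowns at size $n$ are the numbers $\mathsf{h}_n^j(i)$, $i\in C_n$, for each fixed $j$. The bulk recurrence expresses $\mathsf{h}_n^j(i)$ for $i<n$ through $\mathsf{h}_n^j(n)$ and smaller-size quantities. The boundary recurrence expresses $\mathsf{h}_n^j(n)$ through $\mathsf{h}_n^j(n-1)$. Substituting the bulk recurrence with $i=n-1$ into the boundary one gives a single scalar equation
$$\mathsf{h}_n^j(n)\Big(1-\tfrac13\sgrright{n-1}{n-1}\Big)=(\text{known terms}).$$
Since $\sgrright{n-1}{n-1}=\frac{n-1}{n+1}<1$, the coefficient is nonzero. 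So $\mathsf{h}_n^j(n)$, and then every $\mathsf{h}_n^j(i)$, is uniquely determined. It therefore suffices to check that $H_n$, independent of $j$, satisfies both recurrences once the inductive hypothesis is inserted for all size-$<n$ terms.

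\textbf{Boundary recurrence.} For $j<n$ we must check
$$H_n(n)=\tfrac13 H_n(n-1)+\tfrac13 H_{n-1}(n-1).$$
Here $H_n(n)=\frac{2}{(n+1)(n+2)}$, $H_n(n-1)=\frac{4(n-1)}{n(n+1)(n+2)}$ and $H_{n-1}(n-1)=\frac{2}{n(n+1)}$. The right-hand side equals $\frac{4(n-1)+2(n+2)}{3n(n+1)(n+2)}=\frac{6n}{3n(n+1)(n+2)}$, which matches.

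For $j=n$ the term $H_{n-1}(n-1)$ is replaced by $\sgrleft{n-1}{n-1}=\frac{2}{n(n+1)}$. This is the same number, so that case also holds.

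\textbf{Bulk recurrence (main obstacle).} For $i<n$ substitute $\mathsf{h}_{n-1}^x(i)=H_{n-1}(i)$, $\mathsf{h}_{n-x}^{j-x}(n-x)=H_{n-x}(n-x)=\frac{2}{(n-x+1)(n-x+2)}$, $\mathsf{h}_n^j(n)=H_n(n)$ and $\sgrright{n-j}{n-j}=\frac{n-j}{n-j+2}$. Since $H_{n-1}(i)$ does not depend on $x$, the sum telescopes:
$$\sum_{x=1}^{j-1}\frac{2}{(n-x+1)(n-x+2)}=2\Big(\frac1{n-j+2}-\frac1{n+1}\Big).$$
Adding $\frac{n-j}{n-j+2}$, the $j$-dependence cancels, since $\frac{2}{n-j+2}+\frac{n-j}{n-j+2}=1$. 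The total is then
$$\sgrright{n-1}{i}\,H_n(n)+H_{n-1}(i)\Big(1-\tfrac{2}{n+1}\Big)=\frac{i(i+1)}{n(n+1)}\cdot\frac{2}{(n+1)(n+2)}+\frac{2i(n-i)}{(n-1)n(n+1)}\cdot\frac{n-1}{n+1}.$$
Over the common denominator $n(n+1)^2(n+2)$ the numerator is $2i\big[(i+1)+(n-i)(n+2)\big]=2i(n+1)(n-i+1)$. This gives exactly $H_n(i)$.

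This telescoping and the resulting cancellation of $j$ is the crux of the argument. Together with the uniqueness step it completes the induction.

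Finally, summing over $j$ gives $n\cdot H_n(i)=\sgrboth{n}{i}$, consistent with Proposition \ref{prop:sgr}. This also confirms the first equality in the statement.
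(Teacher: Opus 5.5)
Your proposal is correct and follows essentially the same route as the paper: uniqueness by induction on $n$, obtained by substituting the bulk recurrence at $i=n-1$ into the boundary recurrence (where the coefficient $\frac13\sgrright{n-1}{n-1}<1$), followed by direct verification of the boundary recurrence for $j<n$ and $j=n$ and of the bulk recurrence via the same telescoping sum that removes the $j$-dependence. You could state explicitly that for $j=n$ the true term $\mathsf{h}_{n-1}^{n}(i)$ is $0$ rather than $H_{n-1}(i)$; this is harmless, since it is multiplied by $\sgrright{0}{0}=\frac{n-j}{n-j+2}\big|_{j=n}=0$.
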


\begin{proof}
    We prove this by showing that $g_n^j(i) := \frac{\sgrboth{n}{i}}{n}$ uniquely satisfies the recurrence relations of Lemma \ref{lemma:hole-recurrence}.
    
     We first show that the boundary and bulk recurrence relations uniquely determine the entire sequence of probabilities. We prove this by induction on $n$. The base case $n=1$ is uniquely determined by the toppling rules. Assume the sequence is uniquely determined for all system sizes $m < n$. For system size $n$, evaluating the bulk recurrence at $n-1$ yields $$\mathsf{h}_n^j(n-1) = \sgrright{n-1}{n-1}\mathsf{h}_n^j(n) + K_j,$$ where 
     $$K_j:=\sum_{x=1}^{j-1} \mathsf{h}_{n-1}^x(n-1)\mathsf{h}_{n-x}^{j-x}(n-x) + \mathsf{h}_{n-1}^j(n-1)\sgrright{n-j}{n-j},$$
     depends only on systems of size $m < n$ and the location of the hole $j$, and is therefore fixed. Substituting this into the boundary recurrence yields a linear equation of the form
    $$ \mathsf{h}_n^j(n) = \frac{1}{3} \Big( \sgrright{n-1}{n-1}\mathsf{h}_n^j(n) + K_j \Big) + C_j, $$
    where
    $$j<n:C_j:=\frac{1}{3}\mathsf{h}_{n-1}^j(n-1),\hspace{1cm}C_n:=\frac{1}{3}\sgrleft{n-1}{n-1},$$
    is another fixed constant independent of the hole probabilities of system size $n$. Because the coefficient $\sgrright{n-1}{n-1}/3$ is strictly less than $1$, this equation has a strictly unique scalar solution for $\mathsf{h}_n^j(n)$. With the boundary value uniquely fixed, the bulk recurrence explicitly and uniquely determines the remaining values $\mathsf{h}_n^j(i)$ for $i < n$. Thus, if $g_n^j(i)$ satisfies the recurrences, then $g_n^j(i)= \mathsf{h}_n^j(i)$ for all $n\in\N$ and all $i,j\in C_n$.
    
    We will now verify that our proposed function $(g_n^j(i))_{i,j\in C_n}$ satisfies the boundary recurrence. The left-hand side of the equation simplifies to
    $$g_n^j(n) = \frac{\sgrboth{n}{n}}{n} = \frac{2n}{n(n+1)(n+2)} = \frac{2}{(n+1)(n+2)}.$$
    Now, calculating the right-hand side for $j < n$ gives
    \begin{align*}
        \frac{1}{3} \bigg( g_n^j(n-1) + g_{n-1}^j(n-1)\bigg) &= \frac{1}{3} \bigg(\frac{4(n-1)}{n(n+1)(n+2)} + \frac{2(n-1)}{(n-1)n(n+1)}\bigg)\\ &= \frac{1}{3} \frac{4(n-1) + 2(n+2)}{n(n+1)(n+2)}  = \frac{1}{3} \frac{6n}{n(n+1)(n+2)}  = \frac{2}{(n+1)(n+2)}. 
    \end{align*} 
    Similarly, the right-hand side for $j = n$ gives
    \begin{align*}
        \frac{1}{3} \bigg( g_n^n(n-1) + \sgrleft{n-1}{n-1}\bigg) &= \frac{1}{3} \bigg(\frac{4(n-1)}{n(n+1)(n+2)} + \frac{2}{n(n+1)}\bigg)\\ &= \frac{2}{(n+1)(n+2)}. 
    \end{align*} 
    This shows that $(g_n^j(i))_{i,j\in C_n}$ satisfies the boundary recurrence.
    
    Next, we will verify that our proposed function also satisfies the bulk recurrence. We substitute $g$ into the right-hand side for $i < n$. Notice that by definition, $g_{n-1}^x(i)$ does not depend on $x$. Thus, we can factor it out of the sum $S_j := \sum_{x=1}^{j-1} g_{n-1}^x(i)g_{n-x}^{j-x}(n-x)$ to obtain
    \begin{align*}
        S_j &= \frac{\sgrboth{n-1}{i}}{n-1} \sum_{x=1}^{j-1} \frac{2(n-x)}{(n-x)(n-x+1)(n-x+2)} = \frac{\sgrboth{n-1}{i}}{n-1} \sum_{x=1}^{j-1} \left( \frac{2}{n-x+1} - \frac{2}{n-x+2} \right) \\&= \frac{\sgrboth{n-1}{i}}{n-1} \left( \frac{2}{n-j+2} - \frac{2}{n+1} \right),
    \end{align*}
    Combining $S_j$ with $g_{n-1}^j(i)\sgrright{n-j}{n-j}$, gives
    \begin{align*}
        S_j + g_{n-1}^j(i)\sgrright{n-j}{n-j} &= \frac{\sgrboth{n-1}{i}}{n-1} \left[ \left( \frac{2}{n-j+2} - \frac{2}{n+1} \right) + \frac{n-j}{n-j+2} \right] \\
        &= \frac{\sgrboth{n-1}{i}}{n-1} \left[ \frac{n-j+2}{n-j+2} - \frac{2}{n+1} \right] = \frac{\sgrboth{n-1}{i}}{n-1} \frac{n-1}{n+1} = \frac{2i(n-i)}{n(n+1)^2}.
    \end{align*}
    Finally, adding $\sgrright{n-1}{i}g_n^j(n)$ we see that the right-hand side of the bulk recurrence simplifies to
    \begin{align*}
        \frac{i(i+1)}{n(n+1)}   \frac{2}{(n+1)(n+2)}  &+ \frac{2i(n-i)}{n(n+1)^2}
        = \frac{2i}{n(n+1)^2(n+2)} \Big[ i + 1 + (n-i)(n+2) \Big] \\
        &= \frac{2i}{n(n+1)^2(n+2)} \Big[ n^2 + 2n + 1 - i(n+1) \Big] \\
        &= \frac{2i(n+1)(n-i+1)}{n(n+1)^2(n+2)} = \frac{2i(n-i+1)}{n(n+1)(n+2)} = g_n^j(i),
    \end{align*}
    as desired.
    Since $(g_n^j(i))_{i,j\in C_n}$ satisfies both recurrences, and the solution to the recurrences is unique, we obtain $\mathsf{h}_n^j(i) = g_n^j(i)$ for all $n\in\N$ and all $i,j\in C_n$, concluding the proof.
\end{proof}
\begin{corollary}[Transition Probabilities to Vacant Configurations]\label{cor:hole-transitions}
    Let $\trans{n}$ denote the transition matrix of the stochastic sandpile Markov chain on the recurrent configurations of $C_n$ for $n\in\N$. The probabilities of transitioning to the vacant configurations are given by
    \begin{align*}
        &\trans{n}(\one_{C_n}, \one_{C_n} - \delta_i) = \frac{1}{3n} \quad &&\text{for all } i \in C_n,  \\
        &\trans{n}(\one_{C_n} - \delta_j, \one_{C_n} - \delta_i) = \frac{1}{3n} \quad &&\text{for } i \neq j,  \\
        &\trans{n}(\one_{C_n} - \delta_i, \one_{C_n} - \delta_i) = \frac{n-1}{3n} &&\text{for all }i\in C_n.
    \end{align*}
\end{corollary}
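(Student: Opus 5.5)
We would derive all three formulas from Proposition \ref{prop:hole-uniformity}, the abelian property, and a decomposition according to the location of the added particle, in the same way as in the proof of Lemma \ref{lemma:transition-to-full}.

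For the first formula, start from $\one_{C_n}$. With probability $1/n$ the particle is added at a given $k\in C_n$. Stabilizing $\one_{C_n}+\delta_k$ then produces a hole at $i$ with probability $\mathsf{h}_n^i(k)=\sgrboth{n}{k}/n$, by Proposition \ref{prop:hole-uniformity}. Hence
$$\trans{n}(\one_{C_n},\one_{C_n}-\delta_i)=\frac{1}{n^2}\sum_{k=1}^n\sgrboth{n}{k}.$$
Lemma \ref{lemma:transition-to-full} already contains the identity $\frac1n\sum_k\sgrboth{n}{k}=\frac13$, so this equals $\frac{1}{3n}$.

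For the second formula, start from $\one_{C_n}-\delta_j$ with $i\neq j$. Take $i<j$; the case $i>j$ follows by the reflection symmetry $k\mapsto n-k+1$. If the particle is added at $j$, we land on the full configuration, which is not the target. If it is added at $k>j$, we stabilize $\{j+1,\dots,n\}$ first. At most one particle can cross left into $j$, and any hole created lies to the right of $j$. So either the hole at $j$ survives, or the hole ends up in $\{j+1,\dots,n\}$, and we never reach $\one_{C_n}-\delta_i$ with $i<j$.

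It remains to add the particle at $k<j$ and stabilize $\{1,\dots,j-1\}$, a copy of $C_{j-1}$, first. The vertex $j$ is empty, so if one particle enters $j$ it simply sits there, and $j$ acts as a sink for the sub-segment. The sub-stabilization ends in $\one_{C_n}-\delta_i$ exactly when two particles are lost from $C_{j-1}$ (one of them filling $j$) and the hole lands at $i$. This has probability $\mathsf{h}_{j-1}^i(k)=\sgrboth{j-1}{k}/(j-1)$. Summing over $k$ gives
$$\frac1n\sum_{k=1}^{j-1}\frac{\sgrboth{j-1}{k}}{j-1}=\frac1n\cdot\frac13,$$
again by the averaging identity, now applied with $n$ replaced by $j-1$.

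The diagonal formula follows from row sums, and so does not need a separate trajectory analysis. The row of $\one_{C_n}-\delta_i$ sums to $1$, and its total mass is
$$\frac{n+2}{3n}+(n-1)\cdot\frac{1}{3n}+P,$$
where the first term comes from Lemma \ref{lemma:transition-to-full}, the middle term from the off-diagonal case, and $P$ is the diagonal entry. Solving gives $P=1-\frac{2n+1}{3n}=\frac{n-1}{3n}$.

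The main obstacle is justifying that the hole at $j$ acts as an absorbing boundary. It absorbs exactly one particle, so stabilization of one side never propagates into the other side, and sub-stabilization probabilities factor as in the proof of Lemma \ref{lemma:hole-recurrence}. This needs a careful abelian-property argument: after a single particle fills $j$, vertex $j$ holds one particle and is stable, and the sub-segment has then stabilized with its second particle lost on the other side.
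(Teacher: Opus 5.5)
Your proposal is correct. For the first two formulas you follow the paper's argument. The paper also writes $\trans{n}(\one_{C_n},\one_{C_n}-\delta_i)=\frac1n\sum_{x}\mathsf{h}_n^i(x)$, and for $i<j$ it writes $\trans{n}(\one_{C_n}-\delta_j,\one_{C_n}-\delta_i)=\frac1n\sum_{x<j}\mathsf{h}_{j-1}^i(x)$. It closes both with $\sum_{x=1}^m\sgrboth{m}{x}=m/3$. You are more explicit than the paper about why adding the particle at $k\ge j$ cannot put the hole to the left of $j$. Your case list for $k>j$ omits the outcome $\one_{C_n}$, where the right block ejects its single particle only into $j$, but that is not the target either, so nothing changes.

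The genuine difference is the diagonal entry. The paper computes it directly. The hole stays at $i$ exactly when either the particle lands at $x<i$ and the block $\{1,\dots,i-1\}$ ejects a single particle to the left, or it lands at $x>i$ and the block $\{i+1,\dots,n\}$ ejects a single particle to the right. This gives $\frac1n\bigl(\sum_{x=1}^{i-1}\sgrleft{i-1}{x}+\sum_{x=i+1}^{n}\sgrright{n-i}{x-i}\bigr)=\frac{n-1}{3n}$, using $\sum_{x=1}^m\sgrleft{m}{x}=m/3$.

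You instead obtain it as the complement of the row, $1-\frac{n+2}{3n}-\frac{n-1}{3n}$. Your route is shorter and needs no further gambler's-ruin sums. It does, however, depend on two things:
\begin{enumerate}
\item Lemma \ref{lemma:transition-to-full};
\item the fact that from $\one_{C_n}-\delta_i$ the chain only lands in $\mathcal{R}$, i.e.\ each sub-stabilization loses at most one particle per side. This is the identity $\sgrleft{m}{k}+\sgrright{m}{k}+\sgrboth{m}{k}=1$ used in Proposition \ref{prop:sgr}.
\end{enumerate}
You should cite the second point explicitly when you invoke the row sum. The paper's direct computation does not need it. Comparing the two computations also gives an independent check that the rows of $\trans{n}$ sum to one.
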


\begin{proof}
    We will go through the three different transition probabilities of Corollary \ref{cor:hole-transitions}.
    
    Firstly, observe for $i\in C_n$ that the Markov chain transitions from the sandpile $\one_{C_n}$ to $\one_{C_n} - \delta_i$, if we add a particle uniformly at random at some site $x \in C_n$ and the subsequent stabilization forms a hole at $i$. That is, 
    $$ \trans{n}(\one_{C_n}, \one_{C_n} - \delta_y) = \frac{1}{n} \sum_{x=1}^n \mathsf{h}_n^i(x) = \frac{1}{n} \sum_{x=1}^n \frac{\sgrboth{n}{x}}{n},$$
    where the second equality follows from Proposition \ref{prop:hole-uniformity}.
    Recall from the proof of Lemma \ref{lemma:transition-to-full} that $\sum_{x=1}^n \sgrboth{n}{x} = n/3$. Thus, the transition probability simplifies to $$ \trans{n}(\one_{C_n}, \one_{C_n} - \delta_i)=\frac{1}{n^2}  \frac{n}{3}  = \frac{1}{3n},$$
    as desired.
    Secondly, suppose we start at $\one_{C_n} - \delta_j$ and transition to a hole at $i < j$ for $i,j\in C_n$. This requires adding a particle at some $x < j$, and that the sub-segment $\{1, \dots, j-1\}$ loses two particles during stabilization to form a new hole at $i$. That is, 
    $$ \trans{n}(\one_{C_n} - \delta_j, \one_{C_n} - \delta_i) = \frac{1}{n} \sum_{x=1}^{j-1} \mathsf{h}_{j-1}^i(x) = \frac{1}{n} \sum_{x=1}^{j-1} \frac{\sgrboth{j-1}{x}}{j-1} = \frac{1}{n(j-1)} \frac{j-1}{3} = \frac{1}{3n}, $$
    where we again used Proposition \ref{prop:hole-uniformity} and the identity $\sum_{x=1}^n \sgrboth{n}{x} = n/3$.
    The case $i > j$ can be treated analogously.
    
    Finally, for all $i\in C_n$, the hole remains at $i$ if we add a particle at $x < i$ and the left sub-segment sends one particle to the left, or if we add a particle at $x > i$ and the right sub-segment exits one particle to the right. Summing these contributions yields
    $$ \trans{n}(\one_{C_n} - \delta_i, \one_{C_n} - \delta_i) = \frac{1}{n} \left( \sum_{x=1}^{i-1} \sgrleft{i-1}{x} + \sum_{x=i+1}^n \sgrright{n-i}{x-i} \right). $$
    Using the identity $\sum_{x=1}^m \sgrleft{m}{x} = \frac{m}{3}$, this evaluates to
    $$ \trans{n}(\one_{C_n} - \delta_i, \one_{C_n} - \delta_i)= \frac{1}{n} \left( \frac{i-1}{3} + \frac{n-i}{3} \right) = \frac{n-1}{3n}, $$
    which completes the proof.
\end{proof}
\subsection{Stationary Distribution on Vacant Configurations}\label{sec:vacant}
In this section, we will use the results on hole probabilities from Section \ref{sec:holes} to calculate the stationary distribution on the vacant configurations. That is, we will prove \ref{USS2} from Theorem \ref{thm:main}. Let us again consider the graph $C_n$ for some $n\in\N$.

We have already found the transition probabilities to the vacant configurations in Corollary \ref{cor:hole-transitions}, we can now use these results for the stationary probabilities.
\begin{proposition}[Stationary Probabilities of Vacant Configurations ]\label{prop:vacant-stat}
    Let $n\in\N$ and consider the stationary distribution $\statn{n}$ of the stochastic sandpile Markov chain on $C_n$. For all $v\in C_n$ it holds that
    $$\statn{n}(\one_{C_n}-\delta_v)=\frac{1}{2(n+1)},$$
    that is, the stationary distribution is uniform when restricted to the vacant configurations.
\end{proposition}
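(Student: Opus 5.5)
The plan is to write down the stationarity equation at a fixed vacant configuration $\one_{C_n}-\delta_v$, using the transition probabilities from Corollary \ref{cor:hole-transitions}, and then substitute the value of $\statn{n}(\one_{C_n})$ from Proposition \ref{prop:concentration-on-full}. Since the chain is supported on $\mathcal{R}_{n}:=\{\one_{C_n}\}\cup\{\one_{C_n}-\delta_j:j\in C_n\}$, stationarity at $\one_{C_n}-\delta_v$ reads
\begin{align*}
\statn{n}(\one_{C_n}-\delta_v)=\statn{n}(\one_{C_n})\trans{n}(\one_{C_n},\one_{C_n}-\delta_v)+\sum_{j\in C_n}\statn{n}(\one_{C_n}-\delta_j)\trans{n}(\one_{C_n}-\delta_j,\one_{C_n}-\delta_v).
\end{align*}

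First, insert the values from Corollary \ref{cor:hole-transitions}. The transition from the full configuration contributes $\statn{n}(\one_{C_n})/(3n)$. The off-diagonal vacant terms each carry the factor $1/(3n)$, and the diagonal term carries $(n-1)/(3n)$. Split the diagonal coefficient as $(n-1)/(3n)=1/(3n)+(n-2)/(3n)$. The sum over $j$ then becomes
\[
\frac{1}{3n}\bigl(1-\statn{n}(\one_{C_n})\bigr)+\frac{n-2}{3n}\statn{n}(\one_{C_n}-\delta_v),
\]
where we used that the total vacant mass equals $1-\statn{n}(\one_{C_n})$. The two terms involving $\statn{n}(\one_{C_n})$ cancel, and the equation collapses to
\[
\statn{n}(\one_{C_n}-\delta_v)\Bigl(1-\frac{n-2}{3n}\Bigr)=\frac{1}{3n},
\quad\text{that is,}\quad
\statn{n}(\one_{C_n}-\delta_v)=\frac{1}{2(n+1)}.
\]
As a consistency check, $n\cdot\frac{1}{2(n+1)}+\frac12+\frac{1}{2(n+1)}=1$, in agreement with Proposition \ref{prop:concentration-on-full}.

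The step I expect to need the most care is not the algebra but confirming that the transition probabilities from Corollary \ref{cor:hole-transitions}, together with those from Lemma \ref{lemma:transition-to-full}, actually describe the whole chain on $\mathcal{R}_{n}$. Concretely, I will check that every row sums to one. For the vacant row this is $\frac{n+2}{3n}+\frac{n-1}{3n}+\frac{n-1}{3n}=1$; for the full row it is $\frac23+n\cdot\frac1{3n}=1$. I will also note that the chain is irreducible on $\mathcal{R}_{n}$, since all entries above are positive for $n\ge2$, so the stationary distribution is unique. In the trivial case $n=1$ the formula reads $\frac14$, and I will check it directly.
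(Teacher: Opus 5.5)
Your proposal is correct and matches the paper's proof. It writes the same stationarity equation at $\one_{C_n}-\delta_v$, inserts the entries of Corollary \ref{cor:hole-transitions} and the normalization, and arrives at $\statn{n}(\one_{C_n}-\delta_v)=\frac{1}{3n}+\frac{n-2}{3n}\statn{n}(\one_{C_n}-\delta_v)$. Your additions are also sound: the observation that the two $\statn{n}(\one_{C_n})$ terms cancel (so Proposition \ref{prop:concentration-on-full} is not needed, whereas the paper substitutes its value and then gets the same cancellation), and your row-sum and irreducibility checks.
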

\begin{proof}
    Let $n\in\N$ and $v\in C_n$. Similarly  to the proof of Proposition \ref{prop:concentration-on-full} we use the definition of stationarity to obtain
    \begin{equation}
    \label{eq:stat-vacant}
    \begin{aligned}
        \statn{n}(\one_{C_n}-\delta_v)&=\statn{n}(\one_{C_n})\trans{n}(\one_{C_n},\one_{C_n}-\delta_v)\\&+\sum_{i=1}^n\statn{n}(\one_{C_n}-\delta_i)\trans{n}(\one_{C_n}-\delta_i,\one_{C_n}-\delta_v).
    \end{aligned}
    \end{equation}
    From Corollary \ref{cor:hole-transitions} together with Proposition \ref{prop:concentration-on-full} we obtain
    $$\statn{n}(\one_{C_n})\trans{n}(\one_{C_n},\one_{C_n}-\delta_v)=\bigg(\frac{1}{2}+\frac{1}{2(n+1)}\bigg)\frac{1}{3n}.$$
    Using Corollary \ref{cor:hole-transitions} we also obtain
    \begin{align*}
    \sum_{\substack{i=1\\ i\neq v}}^{v-1}\statn{n}(\one_{C_n}-\delta_i)\trans{n}(\one_{C_n}-\delta_i,\one_{C_n}-\delta_v)&=\frac{1}{3n}\sum_{\substack{i=1\\ i\neq v}}^n\statn{n}(\one_{C_n}-\delta_i)\\&=\frac{1}{3n}\bigg(\frac{1}{2}-\frac{1}{2(n+1)}-\statn{n}(\one_{C_n}-\delta_v)\bigg),
    \end{align*}
    as well as
    $$\statn{n}(\one_{C_n}-\delta_v)\trans{n}(\one_{C_n}-\delta_v,\one_{C_n}-\delta_v)=\frac{n-1}{3n}\statn{n}(\one_{C_n}-\delta_v).$$
    Reinserting this in Equation (\ref{eq:stat-vacant}) we obtain the equation
    \begin{align*}
        \statn{n}(\one_{C_n}-\delta_v)=\frac{1}{3n}+\frac{n-2}{3n}\statn{n}(\one_{C_n}-\delta_v),
    \end{align*}
    which we can solve for $\statn{n}(\one_{C_n}-\delta_v)$ to obtain
    $$(2n+2)\statn{n}(\one_{C_n}-\delta_v)=1\Rightarrow\statn{n}(\one_{C_n}-\delta_v)=\frac{1}{2(n+1)},$$
    which concludes the proof.
\end{proof}
Since all sets of the form $[[a,b]]$ for $a,b\in\Z$ with $a<0<b$ are simply translated versions of the set $C_{|a|+b+1}$, this also shows \ref{USS2} from Theorem \ref{thm:main}.

\subsection{Proof of the Infinite Volume Limit}\label{sec:proof-infinite-volume}

In this section, we are finally ready to prove the existence of the infinite volume limit as stated in \ref{USS3}. For the prove, we will use the results on the stationary distribution from \ref{USS1} and \ref{USS2}.

\begin{proof}[Proof of \ref{USS3} from Theorem \ref{thm:main}]
    Consider arbitrary sequences $(a_n)_{n\in\N}$ and $(b_n)_{n\in\N}$ with
    \begin{align*}
    \forall n\in\N: a_n<0<b_n,&&\text{and}&&\lim_{n\rightarrow\infty}a_n=-\infty,&&\text{and}&&\lim_{n\rightarrow\infty}b_n=\infty.
    \end{align*}
    Furthermore, let $A\subseteq\Z$ be any finite subset of the integers. Let us now choose some $n_0\in\N$ such that
    $$A\subseteq [[a_n,b_n]],$$
    for all $n>n_0$, which we can do since the set $A$ is assumed to be finite. We now see for $n>n_0$
    \begin{align*}
        \stat{a_n}{b_n}(\eta|_A\neq\one_A)=\sum_{i\in A}\stat{a_n}{b_n}(\one_{[[a_n,b_n]]}-\delta_i)=\frac{|A|}{2(|a_n|+b_n+2)}.
    \end{align*}
    Sending $n$ to infinity we thus obtain
    $$\lim_{n\rightarrow\infty}\stat{a_n}{b_n}(\eta|_A=\one_A)=1-\lim_{n\rightarrow\infty}\frac{|A|}{2(|a_n|+b_n+2)}=1,$$
    which shows
    $$\lim_{[[a,b]]\uparrow \Z}\stat{a}{b}=\delta_{\mathbbm{1}_\Z},$$
    where the limit is in the uniform weak sense.
\end{proof}
\section{Conclusion}\label{sec:conclusion}
In this paper, we were able to fully calculate the stationary distribution of stochastic sandpiles with the uniform toppling rule on finite line segments, as well as establish the existence of the infinite volume limit on the integers. We want to conclude this work, by addressing some interesting questions suitable for further research efforts.

\textbf{Monotonicity of the Density.} We have shown that the stationary distribution for the uniform toppling rule concentrates around half of its total mass on the full configuration on finite line segments. It seems unlikely that a similar concentration phenomenon occurs on more complicated graphs, however, we believe that the average density of the stationary distribution will always be strictly larger in the uniform toppling case.

\begin{figure}
    \centering
    \includegraphics[width=0.9\linewidth]{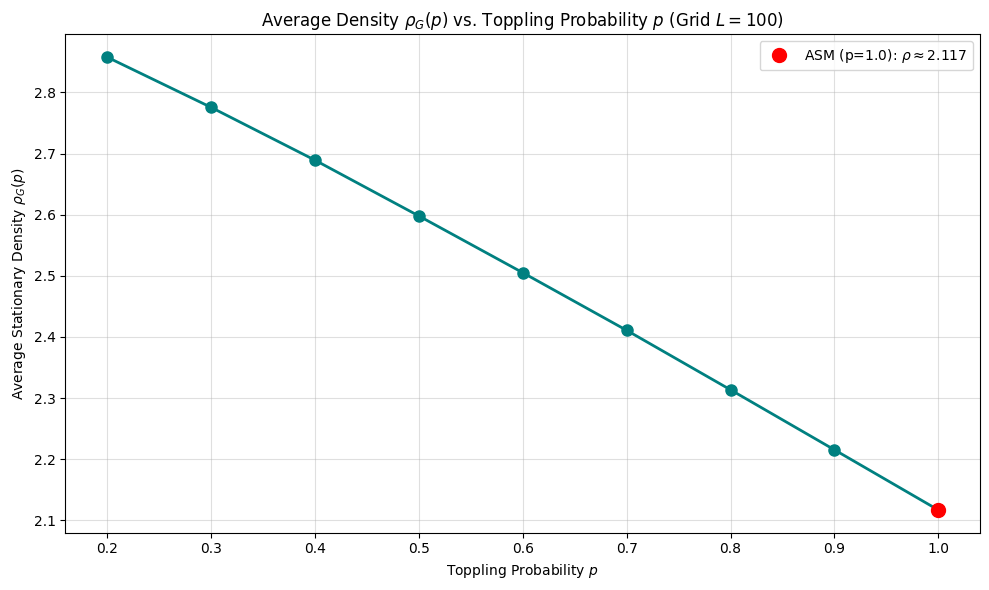}
    \caption{The graph illustrates    the average density $\rho_{G}(p)$ for $p$-topplings at certain values of $p$, where $G=[[1,100]]^2$ is a box with $10000$ vertices. We can see that the density seems to be linearly decaying as we increase the value of $p$.}
    \label{fig:monotonicity}
\end{figure}

Consider a finite graph $G=(V\cup\{s\},E)$ with designated sink vertex $s$. We consider $p$-topplings on $G$, where upon toppling a vertex $v\in V$, for each edge adjacent to $v$, we decide independently of the other edges to sent out a particle via the edge with probability $p$. Should a particle land at $s$, it is removed from the system. Notice that for $p=1$ we obtain the abelian sandpile model, and for $p=1/2$ all possible topplings are equally likely.

Let now denote by $\mu_G^{(p)}$ the stationary distribution of the stochastic sandpile Markov chain with the $p$-toppling rule. Let us now define the average density as
$$\rho_G(p):=\mathbb{E}_p\bigg[\sum_{v\in V}\frac{\eta(v)}{|V|}\bigg],$$
where $\mathbb{E}_p$ denotes the expectation with respect to the stationary measure $\mu_G^{(p)}.$ Our first question concerns the behaviour of the average density $\rho_G$.
\begin{conj}[Monotonicity of the Average Density]
    For all $p_1,p_2\in[0,1]$ with $p_1<p_2$ it holds that
    $$\rho_G(p_1)>\rho_G(p_2),$$
    that is, the average density $\rho_G$ is strictly monotonically decreasing in $p$.
\end{conj}
In fact, it would already be interesting to show, that the abelian sandpile model has the smallest average density.
\begin{conj}[Minimum Attained by Abelian Sandpiles]
    For all $p\in[0,1)$ it holds that
    $$\rho_G(p)>\rho_G(1).$$
\end{conj}

In Figure \ref{fig:monotonicity} we have simulated $p$-topplings on a finite two-dimensional box, and our results further seem to verify the conjectured monotonicity.

\textbf{Percolation of Height 3 Vertices.} We have also simulated how a typical sample of the stationary distribution for $p$-topplings looks on finite boxes in the two-dimensional grid. We refer to Figure \ref{fig:height-3-perc-samples} for an illustration of such samples for different values of $p$.

\begin{figure}
    \centering
    \includegraphics[width=\linewidth]{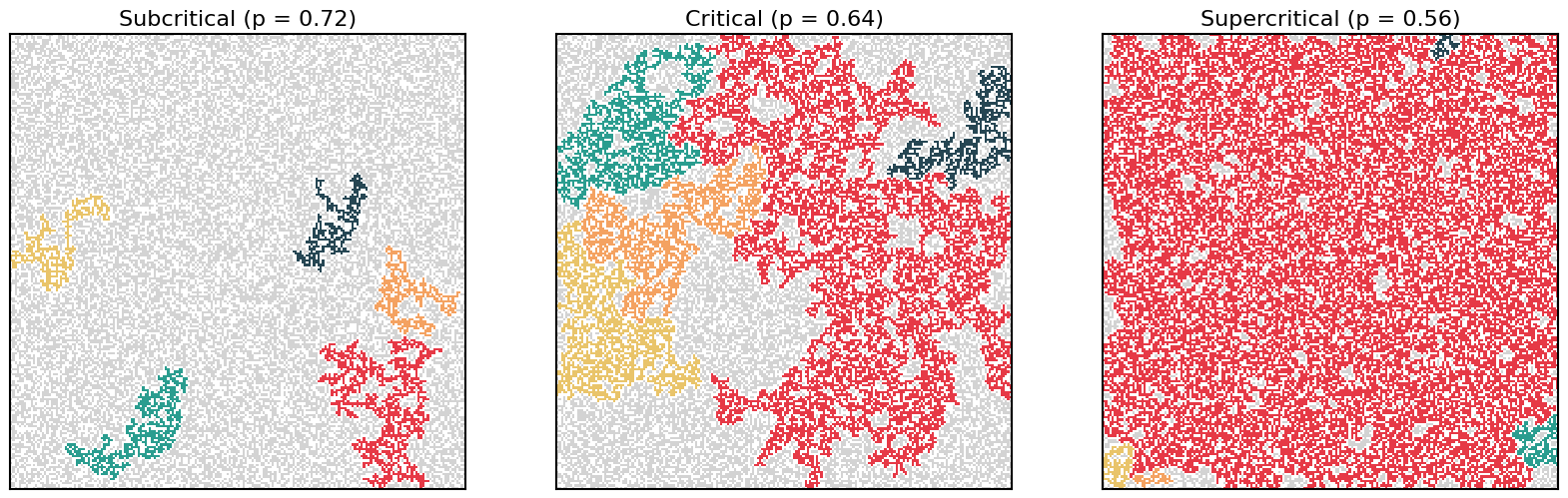}
    \caption{The figure depicts the height $3$ cluster of typical samples of the stationary distribution of stochastic sandpiles with $p$-topplings on a box of side length $200$ for the subcritical and supercritical regime, as well as for a value of $p$ close to the critical regime. The five largest clusters are highlighted in color, the remaining clusters are depicted in gray.}
    \label{fig:height-3-perc-samples}
\end{figure}
Consider for $n\in\N$ the stationary distribution with $p$-topplings $\mu_n^{(p)}$ on a square box $[[1,n]]^2$, where particles are lost when sent outside of the box. Furthermore, denote for a given sandpile configuration $\eta$ the set of height $3$ vertices by
$$T_n(\eta):=\{v\in[[1,n]]^2:\eta(v)=3\}.$$
Our simulations suggest that, as we make $p$ smaller, the typical size of the set $T_n(\eta)$ increases. In fact, for values of $p$ that are close to $1$, there is only a sporadic number of height $3$ vertices. However, as we decrease $p$, eventually the set $T_n(\eta)$ starts to connect the left boundary of the box with the right boundary, that is, it percolates.

\begin{conj}
    There exists $p_c\in (0,1)$ such that for all $p<p_c$ we have that
    $$\mu_n^{(p)}(\text{There exists a path in }T_n(\eta)\text{ that connects the left and right boundary})\xrightarrow{n\rightarrow\infty}1,$$
    and for all $p>p_c$
    $$\mu_n^{(p)}(\text{There exists a path in }T_n(\eta)\text{ that connects the left and right boundary})\xrightarrow{n\rightarrow\infty}0.$$
\end{conj}

In Figure \ref{fig:height-3-perc-phase-transition}, we have plotted the percolation probability against the value of $p$, and it seems that there is a sharp decline around the value $p\approx 0.64.$

\begin{figure}
    \centering
    \includegraphics[width=\linewidth]{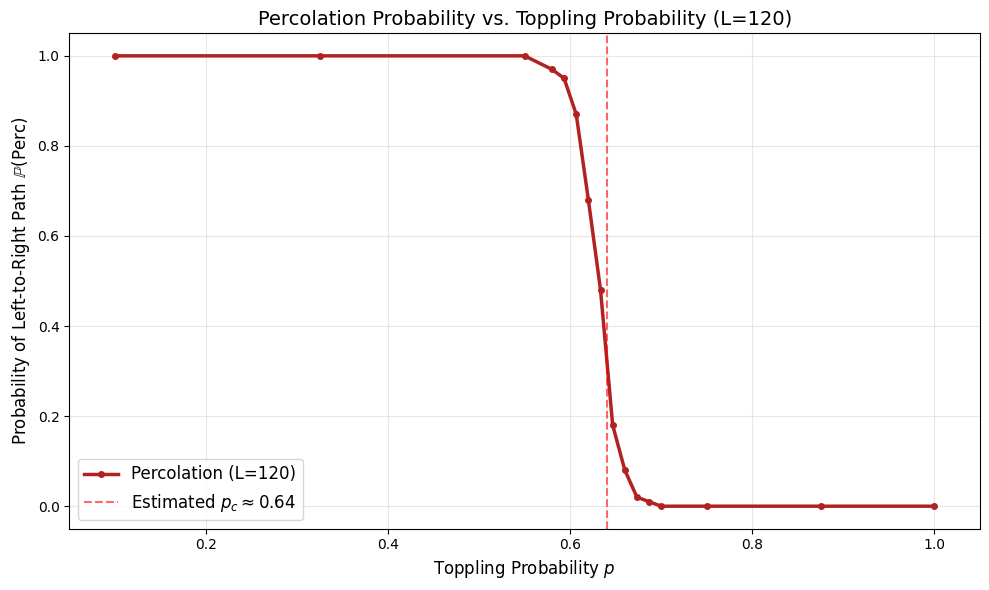}
    \caption{The graph shows the probability of finding a cluster of height $3$ vertices spanning from the right side of a box of side length $L=120$ to the left side of the box. Our simulations suggest that the phase transition occurs when the average density of height $3$ vertices is equal to the critical Bernoulli site percolation threshhold on $\mathbbm{Z}^2$.}
    \label{fig:height-3-perc-phase-transition}
\end{figure}
\newpage
\textbf{Single-Source Sandpiles.} Consider stochastic sandpiles with uniform topplings on the integers $\Z$. Our final question concerns the shape we obtain when we place $n\in\N$ vertices at the origin, and then stabilize the configuration. Our simulations strongly suggest the following conjecture. 

\begin{conj}
    Let us denote by $D_n$ the set of toppled vertices in the stabilization of $(n\delta_0)$ on the integers. Then
    $$\prob\bigg([-(1-\varepsilon)\frac{n}{2},(1-\varepsilon)\frac{n}{2}]\subseteq D_n \subseteq [-(1+\varepsilon)\frac{n}{2},(1+\varepsilon)\frac{n}{2}]\bigg)\xrightarrow{n\rightarrow\infty}1,$$
    for all $\varepsilon>0.$
\end{conj}
In fact, it would be interesting to also consider the limit shape of the single-source model on other graphs, such as $\Z^d$ for $d\geq 2$, or for different toppling rules, such as the $p$-topplings as previously described.  

\textbf{Funding Information.} This research was funded in part by the Austrian Science Fund (FWF) 10.55776/P34713.

\bibliographystyle{alpha}
\bibliography{lit}

\textsc{David Beck-Tiefenbach}, Universität Innsbruck, Institut für Mathematik, Technikerstraße 23, A-6020 Innsbruck, Austria. \texttt{david.beck-tiefenbach@uibk.ac.at}

\textsc{Robin Kaiser}, Departement of Mathematics, CIT, Technische Universität München, Boltzmannstr. 3, D-85748 Garching bei München, Germany. \texttt{ro.kaiser@tum.de}
\end{document}